\newtheorem{thm}{Theorem}[section]
\newtheorem{lem}[thm]{Lemma}
\newtheorem{prop}[thm]{Proposition}
\theoremstyle{definition}
\newtheorem{defn}[thm]{Definition}
\theoremstyle{remark}
\numberwithin{equation}{section}
\begin{document}
\title[]{\textsc{Composition operators on the spaces of harmonic Bloch functions }}
\author{\textsc{S. Esmaeili, Y. Estaremi and A. Ebadian }}
\address{\textsc{S. Esmaeili, Y. estaremi and A. Ebadian}} \email{dr.somaye.esmaili@gmail.com},\email{estaremi@gmail.com},\email{ ebadian.ali@gmail.com.}

\address{Department of mathematics, Payame Noor university , P. O. Box: 19395-3697, Tehran,
Iran.\\
}

\thanks{}
\thanks{}
\subjclass[2010]{47B33}
\keywords{Composition operator, Bloch spaces, Harmonic function.}
\date{}
\dedicatory{}

\begin{abstract}
In this paper we characterize some basic properties of composition operators on the spaces of harmonic Bloch functions. First we provide some equivalent conditions for boundedness and compactness of composition operators. In the sequel we investigate closed range composition operators. These results extends the similar results that were proven for composition operators on the Bloch spaces.
\end{abstract}

\maketitle

\commby{}

\section{\textsc{Introduction}}

Let $D$ be the open unit disk in the complex plane. For a continuously differentiable complex-valued $f(z)=u(z)+i\upsilon(z),$ $z=x+iy,$ we use the common notation for its formal derivatives:

$$f_{z}=\frac{1}{2}(f_{x}-if_{y}),$$  $$f_{\bar{z}}=\frac{1}{2}(f_{x}+if_{y}).$$

 A twice continuously differentiable complex-valued function $f=u+i\upsilon$ on $D$ is called a \it{harmonic function} if and only if the real-valued function $u$ and $\upsilon$ satisfy Laplace's equation $\Delta u=\Delta \upsilon=0$.\\
A direct calculation shows that the Laplacian of $f$ is
$$\Delta f=4f_{z\bar{z}}.$$
Thus for functions $f$ with continuous second partial derivatives, it is clear that $f$ is harmonic if ana only if $\Delta f=0.$
We consider complex-valued harmonic function $f$ defined in a simply connected domain $D\subset C.$ The function $f$ has a canonical decomposition $f=h+\bar{g},$ where $h$ and $g$ are analytic in $D$ \cite{dp}.
A planar complex-valued harmonic function $f$ in $D$ is called a harmonic Bloch function if and only if

$$\beta_{f}=\sup_{z,w\in D,z\neq w}\frac{|f(z)-f(w)|}{\varrho(z,w)}<\infty.$$

Here $\beta_{f}$ is the Lipschitz number of $f$ and

\begin{align*}
\varrho(z,w)=\arctan h|\frac{z-w}{1-\bar{z}w}|,
\end{align*}

denotes the hyperbolic distance between $z$ and $w$ in $D$, where here $\rho(z,w)$ is the pseudo-hyperbolic distance on $D$.
In \cite{cf} Colonna proved that
\begin{align*}
\beta_{f}=\sup_{z\in D}(1-|z|^2)[|f_{z}(z)|+|f_{\bar{z}}(z)|].
\end{align*}
Moreover, the set of all harmonic Bloch mappings, denoted by the symbol $HB(1)$ or $HB$, forms a complex Banach space with the norm $\|.\|$ given by
$$\|f\|_{HB(1)}=|f(0)|+\sup_{z\in D}(1-|z|^2)[|f_{z}(z)|+|f_{\bar{z}}(z)|].$$
 \begin{defn} For $\alpha\in(0,\infty)$, the \it{harmonic $\alpha$-Bloch} space $HB(\alpha)$ consists of complex-valued harmonic function $f$ defined on $D$ such that
$$|||f|||_{HB(\alpha)}=\sup_{z\in D}(1-|z|^2)^\alpha[|f_{z}(z)|+|f_{\bar{z}}(z)|]<\infty,$$
and the harmonic little $\alpha$-Bloch space $HB_{0}(\alpha)$ consists of all function in $HB(\alpha)$ such that
$$\lim_{|z|\rightarrow1}(1-|z|^2)^\alpha[|f_{z}(z)|+|f_{\bar{z}}(z)|]=0.$$
\end{defn}
Obviously, when $\alpha=1$, we have $|||f|||_{HB(\alpha)}=\beta_{f}$. Each $HB(\alpha)$ is a Banach space with the norm given by
\begin{align*}
\|f\|_{HB(\alpha)}&=|f(0)|+\sup_{z\in D}(1-|z|^2)^\alpha[|f_{z}(z)|+|f_{\bar{z}}(z)|],
\end{align*}
and $HB_{0}(\alpha)$ is a closed subspace of $HB(\alpha)$.
Now we define composition operators.
 \begin{defn}Let $D$ be the open unit disk in the complex plane. Let $\varphi$ be an analytic self-map of $D$, i. e., an analytic function $\varphi$ in $D$ such that $\varphi(D)\subset D$. The composition operator $C_{\varphi}$ induced by such $\varphi$ is the linear map on the spaces of all harmonic functions on the unit disk defined by
$$C_{\varphi}f=fo\varphi.$$
\end{defn}
 The composition operators on function spaces were studied by many authors. Some known results about composition operators can be found in \cite{cm} and \cite{sh}.
In this paper we study composition operators on harmonic Bloch-type spaces $HB(\alpha)$.
In section 2, by using of Theorem 2.1 in \cite{lz}, we give a necessary and sufficient condition for the boundedness of $C_{\varphi}$ on $HB(\alpha)$ for $\alpha\in(0,\infty)$, which extends Theorem 3.1 in \cite{lz}, by Lou.
The compactness of $C_{\varphi}$ on analytic Bloch-type spaces were characterized in\cite{ka,lz}. In this paper, we deal the compactness of composition operators between the Banach spaces of harmonic function $HB(\alpha)$ and $HB_{0}(\alpha)$. 
 
Moreover, we investigate closed range composition operators. Closed range composition operators on the Bloch-type spaces have been studied in \cite{ch,cg,gzz,zon}).
The isometric composition operators on Bloch-type spaces have been studied in a number of papers (such as \cite{cco,cf,xc,zn}).
For $\alpha>0$, and $\varphi$ being an analytic self-map of $D$, let
$$\tau_{\varphi,\alpha}(z)=\frac{(1-|z|^2)^\alpha|\varphi^{'}(z)|}{(1-|\varphi(z)|^2)^\alpha}.$$
We write $\tau_{\varphi}$ if $\alpha=1$.
We say that a subset $G\subset D$ is called sampling set for $HB(\alpha)$ if $\exists S>0$ such that for all $f\in HB(\alpha)$,
$$\sup _{z\in G}(1-|z|^2)^\alpha [|f_{z}(z)|]+[|f_{\bar{z}}(z)|]\geq S |||f|||_{HB(\alpha)}.$$
To state the results obtained, we need the following definition.
Let $\rho(z,w)=|\varphi_{z}(w)|$ denote the pseudohyperbolic distance (between $z$ and $w$) on $D$, where $\varphi_{z}$ is a disk automorphism of $D$ that is

$$\varphi_{z}(w)=\frac{z-w}{1-\bar{z}w}.$$
We say that subset $G\subset D$ is an $r$-net for $D$ for some $r\in(0,1)$ if for each $z\in D$, $\exists w\in G$ such that $\rho(z,w)<r$. For $c>0$, let
$$\Omega_{c,\alpha}=\{z\in D: \tau_{\varphi,\alpha}(z)\geq c\},$$
and let $G_{c,\alpha}=\varphi(\Omega_{c,\alpha})$. If $\alpha=1$, we write $\Omega_{c}$ and $G_{c}$.
Now we recall Montel's theorem for harmonic functions.
 \begin{thm}\cite{abr} If $\{u_{n}\}_{n=1}^{\infty}$ is a sequence of harmonic functions in the region $\Omega$ with
$\sup_{n,x\in K}|u_{n}(x)|<\infty$ for every compact set $K\subset\Omega$, then there exists a subsequence, $\{u_{n_{j}}\}_{j=1}^{\infty}$ converging uniformly on every compact set $K\subset\Omega$.
\end{thm}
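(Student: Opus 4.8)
The plan is to deduce the statement from the Arzel\`a--Ascoli theorem, the point being that harmonicity upgrades the uniform boundedness hypothesis into equicontinuity on compact sets; once that is established, a diagonal extraction over an exhaustion of $\Omega$ finishes the proof. The essential analytic ingredient---and the only place where harmonicity is genuinely used---is an interior gradient estimate, so I treat that as the heart of the argument.

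First I would record the interior derivative estimate: if $u$ is harmonic on a ball $B(x_{0},R)$ whose closure lies in $\Omega$ and $|u|\le M$ there, then $|\nabla u(x_{0})|\le CM/R$ for an absolute constant $C$. This is immediate from the fact that each partial derivative $\partial_{j}u$ is itself harmonic, so writing $\partial_{j}u(x_{0})$ as its mean value over $B(x_{0},R)$ and converting to a boundary integral via the divergence theorem bounds $|\partial_{j}u(x_{0})|$ by a constant times $M/R$; equivalently one differentiates the Poisson integral representation of $u$.

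Next, fix a compact set $K\subset\Omega$ and choose $\delta>0$ small enough that the closed $2\delta$-neighborhood $K_{2\delta}$ of $K$ is still a compact subset of $\Omega$. By hypothesis $M:=\sup_{n}\sup_{x\in K_{2\delta}}|u_{n}(x)|<\infty$. For every point $p$ in the $\delta$-neighborhood $K_{\delta}$ the ball $B(p,\delta)$ lies in $K_{2\delta}$, so the interior estimate gives $|\nabla u_{n}(p)|\le CM/\delta$ uniformly in $n$. Integrating along segments, which remain in $K_{\delta}$ whenever their endpoints in $K$ are within distance $\delta$, yields the uniform Lipschitz bound $|u_{n}(x)-u_{n}(y)|\le (CM/\delta)\,|x-y|$ for such $x,y\in K$. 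Hence $\{u_{n}\}$ is equicontinuous and uniformly bounded on $K$, and Arzel\`a--Ascoli produces a subsequence converging uniformly on $K$.

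Finally I would exhaust $\Omega$ by compact sets $K_{1}\subset K_{2}\subset\cdots$ with $\bigcup_{k}K_{k}=\Omega$ and each $K_{k}$ contained in the interior of $K_{k+1}$. Applying the previous step successively---extracting a subsequence that converges uniformly on $K_{1}$, then a further subsequence for $K_{2}$, and so on---the usual diagonal subsequence $\{u_{n_{j}}\}$ converges uniformly on every $K_{k}$, and therefore on every compact $K\subset\Omega$, since each such $K$ is contained in some $K_{k}$. The main obstacle is the first step: the uniform bound by itself carries no equicontinuity information, and it is precisely the interior gradient estimate furnished by harmonicity that supplies it; after that the argument is the standard Arzel\`a--Ascoli-with-diagonalization machinery.
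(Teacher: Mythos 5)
The paper offers no proof of this statement at all---it is quoted directly from \cite{abr}---so there is nothing internal to compare your argument against. Your proof is correct and is essentially the standard one (and the one in Axler--Bourdon--Ramey): the interior gradient estimate $|\nabla u(x_{0})|\le CM/R$ converts local uniform boundedness into local equicontinuity, and then Arzel\`a--Ascoli combined with a diagonal extraction over a compact exhaustion of $\Omega$ yields the locally uniformly convergent subsequence.
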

Also we recall a very useful theorem that we will use it a lot in this paper.
\begin{thm}\label{t1}\cite{lz}
Let $0<\alpha<\infty$. Then there exist $f,g\in HB(\alpha)$ such that

$$|f'(z)|+|g'(z)|\geq\frac{1}{(1-|z|)^\alpha},$$
for all $z\in D$.
\end{thm}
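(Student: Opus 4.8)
The plan is to reduce to analytic functions and then build $f$ and $g$ as two interlaced lacunary series. Since every analytic $h$ is harmonic with $h_{\bar z}=0$ and $h_z=h'$, its $HB(\alpha)$-seminorm equals $\sup_{z\in D}(1-|z|^2)^\alpha|h'(z)|$; hence it suffices to produce two analytic functions $f,g$ of finite such seminorm with $|f'(z)|+|g'(z)|\ge (1-|z|)^{-\alpha}$. Fix a large integer $\lambda$ (how large to be chosen in terms of $\alpha$), set $n_j=\lambda^{j}$ and $a_j=\lambda^{j(\alpha-1)}$ so that $a_jn_j=n_j^{\alpha}$, and define
\[
f(z)=\sum_{j\ \mathrm{even}}a_j z^{n_j},\qquad g(z)=\sum_{j\ \mathrm{odd}}a_j z^{n_j}.
\]
Thus the two functions split the frequency set $\{\lambda^j\}$ into the even and odd powers, each a very thin lacunary sequence of ratio $\lambda^2$, while the combined frequencies have ratio only $\lambda$. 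For membership I would estimate, with $r=|z|$,
\[
(1-r^2)^\alpha|f'(z)|\le (1-r^2)^\alpha\sum_{j}\lambda^{j\alpha}r^{\lambda^j-1},
\]
and bound the sum by $C_\lambda(1-r)^{-\alpha}$ using that the terms $\lambda^{j\alpha}r^{\lambda^j}$ peak at the scale $\lambda^{j}\approx 1/(1-r)$ with value $\asymp (1-r)^{-\alpha}$, grow geometrically (ratio $\lambda^\alpha$) below the peak and decay super-exponentially above it, so the whole series is comparable to its largest term. Multiplying by $(1-r^2)^\alpha\le 2^\alpha(1-r)^\alpha$ gives the uniform bound, so $f,g\in HB(\alpha)$.

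The core of the argument, and the step I expect to be the main obstacle, is the pointwise lower bound, which must hold at \emph{every} $z$, i.e.\ at every radius and every angle. Given $z=re^{i\theta}$ I would let $\lambda^{j_0}$ be the largest frequency not exceeding $1/(1-r)$, so that $\lambda^{j_0}\in\bigl(\lambda^{-1}(1-r)^{-1},\,(1-r)^{-1}\bigr]$; this frequency belongs to $f$ when $j_0$ is even and to $g$ when $j_0$ is odd. Within whichever series contains $\lambda^{j_0}$, the corresponding term has modulus $T_{j_0}=\lambda^{j_0\alpha}r^{\lambda^{j_0}-1}\asymp (1-r)^{-\alpha}$, and because that series is lacunary with the large ratio $\lambda^2$, this term dominates the sum of the moduli of all the others: the lower-frequency terms form a geometric series of ratio $\lambda^{-2\alpha}$, and for $j>j_0$ one has $\lambda^{j}(1-r)>\lambda^{j-j_0-1}$ (since $\lambda^{j_0+1}>1/(1-r)$), so $r^{\lambda^{j}}<e^{-\lambda^{j-j_0-1}}$ kills the higher terms. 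Since the modulus of a single term does not depend on $\theta$, the reverse triangle inequality then gives $|f'(z)|\ge\tfrac12 T_{j_0}$ (or the same for $g'$) \emph{uniformly in the angle}.

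Consequently, for every $z$ at least one of $|f'(z)|,|g'(z)|$ is $\ge c(1-|z|)^{-\alpha}$, and after replacing $f,g$ by $c^{-1}f,c^{-1}g$ the stated estimate follows. The delicate point throughout is the calibration of $\lambda$: it must be large enough that the active term beats the sum of the rest of its own series, yet the splitting into two series is precisely what reconciles this largeness with the requirement that $\lambda^{j_0}$ always lie within a bounded factor of $1/(1-r)$ — something a single series of ratio $\lambda^2$ could not guarantee, since it would leave radial gaps in which no frequency is comparable to $1/(1-r)$. Handling the finitely many small radii (where $(1-|z|)^{-\alpha}$ is bounded and $|f'(0)|+|g'(0)|$ is a fixed positive constant) is then routine.
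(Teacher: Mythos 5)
Your construction is correct: the paper itself gives no proof of this statement (it is quoted from Theorem 2.1 of the cited work of Lou), and your interlaced lacunary series $f=\sum_{j\,\mathrm{even}}\lambda^{j(\alpha-1)}z^{\lambda^j}$, $g=\sum_{j\,\mathrm{odd}}\lambda^{j(\alpha-1)}z^{\lambda^j}$ with $\lambda$ large is essentially the standard argument behind that result (the Ramey--Ullrich construction adapted to the $\alpha$-Bloch scale), including the key points that the dominant term is radial so the lower bound is uniform in the angle, and that two series are needed to avoid radial gaps. No gaps worth flagging.
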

\section{\textsc{Main results}}
In this section we study bounded and compact composition operators on $HB(\alpha)$. And then we nvestigate closed range composition operators on $HB(\alpha)$. First we provide some equivalent conditions for boundedness of composition operator $C_{\varphi}$ on $HB(\alpha)$.

\begin{thm}\label{t1}
 If $0 < \alpha < \infty$, $\varphi \in H(D)$ and $\varphi(D)\subseteq D$, then the following statements are equivalent:\\
a) $C_{\varphi}:HB(\alpha)\rightarrow HB(\alpha)$ is bounded.\\
b)
$$\sup _{z\in D}\frac{(1-|z|^2)^\alpha}{(1-|\varphi(z)|^2)^\alpha}|\varphi^{'}(z)|<\infty.$$
\end{thm}
\begin{proof}
For the implication $a\rightarrow b$, by Theorem 2.1 of \cite{lz} we have that for $0 < \alpha < \infty$ there exist $h,g\in B(\alpha)$ satisfying the inequality
$$|h^{'}(z)|+|g^{'}(z)|\geq \frac{1}{(1-|z|)^\alpha}.$$

If we set $f=h+\bar{g}\in HB(\alpha)$, then $fo\varphi (z)=ho\varphi(z)+\overline{go\varphi (z)}$ and so by the same method of Theorem 3.1 of \cite{lz} we get the proof.\\
For the implication $b\rightarrow a$ we can do the same as Theorem 3.1 of \cite{lz}.
\end{proof}
In the next theorem we consider the composition operator from $HB_{0}(\alpha)$ into $HB(\alpha)$ and we find some conditions under which $C_{\varphi}$ is bounded.
\begin{thm}\label{t2} Let $0<\alpha<\infty$, $\varphi \in H(D)$ and $\varphi(D)\subseteq D$. Then the followings are equivalent:\\
a) $C_{\varphi}:HB_{0}(\alpha)\rightarrow HB(\alpha)$ is bounded.\\
b)
$$\sup _{z\in D}\frac{(1-|z|^2)^\alpha}{(1-|\varphi(z)|^2)^\alpha}|\varphi^{'}(z)|<\infty.$$
\end{thm}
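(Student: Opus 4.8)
The plan is to prove the equivalence by showing that one direction is immediate from the already-established Theorem~\ref{t1}, and the other direction requires a genuine approximation argument. For the implication $b \Rightarrow a$, I would observe that condition (b) is \emph{exactly} the condition appearing in Theorem~\ref{t1}, which asserts the boundedness of $C_\varphi$ on the whole space $HB(\alpha)$. Since $HB_0(\alpha)$ is a closed subspace of $HB(\alpha)$, the restriction of a bounded operator on $HB(\alpha)$ to the subspace $HB_0(\alpha)$ is automatically bounded into $HB(\alpha)$. Thus $b \Rightarrow a$ follows with essentially no work beyond invoking the previous theorem.

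The substantive direction is $a \Rightarrow b$. Here the difficulty is that the test functions $f = h + \bar g$ with $|h'(z)| + |g'(z)| \geq (1-|z|)^{-\alpha}$ produced by Theorem~\ref{t1}, used in the proof of Theorem~\ref{t1} (the boundedness theorem on all of $HB(\alpha)$), need not lie in the \emph{little} space $HB_0(\alpha)$. So I cannot simply repeat the earlier argument. The plan is instead to use a standard family of localized test functions belonging to $HB_0(\alpha)$: for each fixed point $w \in D$ I would build a harmonic function $f_w \in HB_0(\alpha)$, with $\|f_w\|_{HB(\alpha)}$ uniformly bounded in $w$, which is ``peaked'' at $\varphi(w)$ in the sense that $(1-|\varphi(w)|^2)^\alpha\,[|(f_w)_z(\varphi(w))| + |(f_w)_{\bar z}(\varphi(w))|]$ is bounded below by a positive constant. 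A natural candidate is to take $f_w(z) = (1-|\varphi(w)|^2)\,(1 - \bar{\varphi(w)}\,z)^{-\alpha}$ or a real/imaginary combination built from the analytic pieces $h,g$ after multiplying by such a decaying factor that forces membership in the little Bloch space while keeping the derivative large at the designated point.

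The key computational step will then be to apply the boundedness of $C_\varphi : HB_0(\alpha)\to HB(\alpha)$ to each $f_w$ and read off the estimate at the single point $z = w$. Concretely, boundedness gives
$$\|C_\varphi f_w\|_{HB(\alpha)} \leq \|C_\varphi\|\,\|f_w\|_{HB(\alpha)},$$
and the left-hand side dominates $(1-|w|^2)^\alpha\,[|(f_w\circ\varphi)_z(w)| + |(f_w\circ\varphi)_{\bar z}(w)|]$. Expanding the derivative of the composition by the chain rule introduces precisely the factor $|\varphi'(w)|$ together with the derivatives of $f_w$ evaluated at $\varphi(w)$; combining this with the peaking lower bound yields
$$\frac{(1-|w|^2)^\alpha}{(1-|\varphi(w)|^2)^\alpha}\,|\varphi'(w)| \leq C\,\|C_\varphi\|$$
for a constant $C$ independent of $w$. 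Taking the supremum over $w \in D$ gives (b).

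The main obstacle I anticipate is the construction of the test functions: they must simultaneously (i) belong to $HB_0(\alpha)$, not merely $HB(\alpha)$, (ii) have uniformly bounded $HB(\alpha)$-norm as the base point varies, and (iii) retain a uniform lower bound on the relevant derivative combination at the target point $\varphi(w)$. The harmonic setting means I must control both $f_z$ and $f_{\bar z}$, so I would likely handle the analytic part $h$ and the conjugate-analytic part $\bar g$ separately, borrowing the standard single-point extremal functions for the analytic $\alpha$-Bloch space and then checking that the decay factor $(1-|\varphi(w)|^2)$ (or a suitable power) both sends them into the little space and preserves the peaking estimate. Once these three properties are verified, the remainder of the argument is a routine application of the chain rule and the definition of the operator norm.
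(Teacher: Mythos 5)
Your overall strategy is sound, and in fact the paper gives no proof of its own here --- it simply defers to the proof of Theorem 3.3 in the cited paper of Lou, which runs along essentially the lines you describe: the implication $b\Rightarrow a$ is immediate by restricting the bounded operator from Theorem \ref{t1} to the closed subspace $HB_{0}(\alpha)$, and the implication $a\Rightarrow b$ is a localized test-function argument with functions that lie in the little space. So your route is the intended one, not a genuinely different one.

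One concrete point needs repair in your plan for $a\Rightarrow b$. The specific candidate $f_{w}(z)=(1-|\varphi(w)|^{2})(1-\overline{\varphi(w)}\,z)^{-\alpha}$ has derivative $\alpha\,(1-|\varphi(w)|^{2})\,\overline{\varphi(w)}\,(1-\overline{\varphi(w)}z)^{-\alpha-1}$, so at the peak point $z=\varphi(w)$ you get
$(1-|\varphi(w)|^{2})^{\alpha}\,|f_{w}'(\varphi(w))|=\alpha\,|\varphi(w)|$,
which is \emph{not} bounded below by a positive constant: it degenerates as $\varphi(w)\to 0$. As written, your "peaking lower bound" fails exactly there, and the final supremum estimate does not follow on the set where $|\varphi(w)|$ is small. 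This is easily fixed in either of two standard ways: (i) split into the cases $|\varphi(w)|>1/2$ (use your peaked functions) and $|\varphi(w)|\le 1/2$ (use $f(z)=z$, which lies in $HB_{0}(\alpha)$, and note that $(1-|\varphi(w)|^{2})^{-\alpha}$ is then bounded by $(3/4)^{-\alpha}$); or (ii) replace your candidate by a primitive, $h_{a}(z)=\int_{0}^{z}(1-|a|^{2})^{s}(1-\bar{a}\zeta)^{-\alpha-s}\,d\zeta$ with $a=\varphi(w)$ and $s>0$, whose derivative at $a$ equals $(1-|a|^{2})^{-\alpha}$ exactly, with no spurious factor of $|a|$; this family is analytic on a neighbourhood of the closed disk (hence in $HB_{0}(\alpha)$) and has uniformly bounded norm. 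Your remaining steps --- membership in the little space because $|1-\bar{a}z|$ is bounded below by $1-|a|>0$, the uniform norm bound, and the chain-rule identity $|(f\circ\varphi)_{z}|+|(f\circ\varphi)_{\bar z}|=|\varphi'|\,(|h'(\varphi)|+|g'(\varphi)|)$ --- are all correct.
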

\begin{proof} The proof is similar to the proof of Theorem 3.3 of \cite{lz}. Hence we omit the proof.
\end{proof}
Now we consider the composition operator $C_{\varphi}:HB(\alpha)\rightarrow HB_{0}(\alpha)$ and we give an equivalent condition to boundedness of $C_{\varphi}$.
\begin{thm}\label{t3} If $0 < \alpha < \infty$, $\varphi \in H(D)$ and $\varphi(D)\subseteq D$, then the following are equivalent:\\

a) $C_{\varphi}:HB(\alpha)\rightarrow HB_{0}(\alpha)$ is bounded.\\

b)
$$\lim _{|z|\rightarrow 1}\frac{(1-|z|^2)^\alpha}{(1-|\varphi(z)|^2)^\alpha}|\varphi^{'}(z)|=0.$$
\end{thm}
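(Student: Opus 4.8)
The plan is to base the whole argument on a single chain-rule identity and then play it off against the two halves of the test-function machinery. Write $f=h+\bar{g}$ with $h,g$ analytic, and set $F=C_{\varphi}f=f\circ\varphi=h\circ\varphi+\overline{g\circ\varphi}$, which is harmonic because $\varphi$ is analytic. Since $\varphi_{\bar{z}}=0$, the formal derivatives collapse to $F_{z}=h'(\varphi)\varphi'$ and $F_{\bar{z}}=\overline{g'(\varphi)\varphi'}$, so that
$$(1-|z|^2)^\alpha\bigl[|F_{z}(z)|+|F_{\bar{z}}(z)|\bigr]=\tau_{\varphi,\alpha}(z)\,(1-|\varphi(z)|^2)^\alpha\bigl[|h'(\varphi(z))|+|g'(\varphi(z))|\bigr].$$
Since $|f_{w}|+|f_{\bar{w}}|=|h'(w)|+|g'(w)|$ for $f=h+\bar{g}$, the bracketed factor on the right is exactly $(1-|w|^2)^\alpha[|f_{w}|+|f_{\bar{w}}|]$ evaluated at $w=\varphi(z)$, hence is bounded above by $|||f|||_{HB(\alpha)}$. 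This identity is what I will use in both directions.

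For the implication $b\Rightarrow a$, I first note that $\tau_{\varphi,\alpha}$ is continuous on $D$ and tends to $0$ at the boundary, so $\sup_{z\in D}\tau_{\varphi,\alpha}(z)<\infty$; by the boundedness characterization already established (Theorem~\ref{t1}), $C_{\varphi}$ maps $HB(\alpha)$ into $HB(\alpha)$ boundedly. To upgrade the target to $HB_{0}(\alpha)$, I apply the identity: its left-hand side is $\le\tau_{\varphi,\alpha}(z)\,|||f|||_{HB(\alpha)}$, whose right-hand side tends to $0$ as $|z|\to1$, so $F\in HB_{0}(\alpha)$ for every $f\in HB(\alpha)$. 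The operator norm is then controlled by $\sup_{z}\tau_{\varphi,\alpha}(z)$ together with the evaluation term $|F(0)|=|f(\varphi(0))|$, which is dominated by $\|f\|_{HB(\alpha)}$ via the standard growth estimate for $\alpha$-Bloch functions at the fixed interior point $\varphi(0)$.

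For the implication $a\Rightarrow b$, I test the operator on the functions furnished by Theorem~\ref{t1}: choose $h,g\in B(\alpha)$ with $|h'(z)|+|g'(z)|\ge(1-|z|)^{-\alpha}$ and put $f=h+\bar{g}\in HB(\alpha)$. Using the elementary inequality $(1-|z|)^\alpha\le(1-|z|^2)^\alpha$ one gets $|h'(w)|+|g'(w)|\ge(1-|w|^2)^{-\alpha}$, so inserting $w=\varphi(z)$ into the identity yields the reverse bound $(1-|z|^2)^\alpha[|F_{z}|+|F_{\bar{z}}|]\ge\tau_{\varphi,\alpha}(z)$. Since the hypothesis forces $F=C_{\varphi}f\in HB_{0}(\alpha)$, the left-hand side tends to $0$ as $|z|\to1$, whence $\tau_{\varphi,\alpha}(z)\to0$. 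The main obstacle is really the bookkeeping in this last direction: everything hinges on producing a single harmonic test function whose combined derivative is bounded below by $(1-|z|^2)^{-\alpha}$, which is precisely what Theorem~\ref{t1} supplies, and on the comparison of $(1-|z|)^{-\alpha}$ with $(1-|z|^2)^{-\alpha}$; once these are in hand, the chain-rule identity and a squeeze finish the proof.
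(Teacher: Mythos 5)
Your argument is correct and is precisely the adaptation of Lou's Theorem 3.4 that the paper invokes without writing out: the chain-rule identity $(1-|z|^2)^\alpha[|F_z|+|F_{\bar z}|]=\tau_{\varphi,\alpha}(z)(1-|\varphi(z)|^2)^\alpha[|h'(\varphi(z))|+|g'(\varphi(z))|]$ for $F=(h+\bar g)\circ\varphi$, combined in one direction with the bound $\le\tau_{\varphi,\alpha}(z)|||f|||_{HB(\alpha)}$ and in the other with the test function $f=h+\bar g$ built from Theorem 1.4 (via $(1-|w|)^{-\alpha}\ge(1-|w|^2)^{-\alpha}$). This matches the paper's intended route, so no further comment is needed.
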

\begin{proof} By a similar method of the proof of Theorem 3.4 of \cite{lz} we get the proof.
\end{proof}
Finally we provide some conditions for boundedness of the composition operator $C_{\varphi}$ as an operator on $HB_{0}(\alpha)$.
\begin{thm}\label{t4} If $0 < \alpha < \infty$, $\varphi \in H(D)$ and $\varphi(D)\subseteq D$, then the following are equivalent:\\
a) $C_{\varphi}:HB_{0}(\alpha)\rightarrow HB_{0}(\alpha)$ is bounded.\\

b) $\varphi \in B_{0}( \alpha )$ and
$$\sup _{z\in D}\frac{(1-|z|^2)^\alpha}{(1-|\varphi(z)|^2)^\alpha}|\varphi^{'}(z)|<\infty.$$
\end{thm}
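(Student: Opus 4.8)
The plan is to reduce everything to the canonical decomposition $f=h+\bar g$ (with $h,g$ analytic) together with the boundedness theory already obtained on $HB(\alpha)$. The one identity driving both implications is this: since $\varphi$ is analytic, $f\circ\varphi=h\circ\varphi+\overline{g\circ\varphi}$ is again harmonic with $(f\circ\varphi)_z=h'(\varphi)\varphi'$ and $(f\circ\varphi)_{\bar z}=\overline{g'(\varphi)\varphi'}$, so that
$$(1-|z|^2)^\alpha\big[|(f\circ\varphi)_z(z)|+|(f\circ\varphi)_{\bar z}(z)|\big]=\tau_{\varphi,\alpha}(z)\,(1-|\varphi(z)|^2)^\alpha\big[|h'(\varphi(z))|+|g'(\varphi(z))|\big].$$
I will exploit this factorization into the ``$\tau$-factor'' and the ``$f$-factor at $\varphi(z)$'' in each direction.

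For $(b)\Rightarrow(a)$, assume $\varphi\in B_0(\alpha)$ and $M:=\sup_{z}\tau_{\varphi,\alpha}(z)<\infty$. The sup condition, via the boundedness criterion already established for $HB(\alpha)$, makes $C_{\varphi}$ bounded from $HB(\alpha)$ into $HB(\alpha)$; restricting to the closed subspace $HB_{0}(\alpha)$ gives a bounded map into $HB(\alpha)$, so the only thing left is the mapping property $C_{\varphi}\big(HB_{0}(\alpha)\big)\subseteq HB_{0}(\alpha)$. Fix $f=h+\bar g\in HB_{0}(\alpha)$ and $\eps>0$, and choose $r_{0}<1$ with $(1-|w|^2)^\alpha[|h'(w)|+|g'(w)|]<\eps$ for $|w|>r_{0}$. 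On $\{z:|\varphi(z)|>r_{0}\}$ the identity bounds the left side by $M\eps$. On $\{z:|\varphi(z)|\le r_{0}\}$ the $f$-factor is dominated by a constant $C_{0}$ (continuity of $h',g'$ on $\overline{D(0,r_{0})}$, together with $(1-|\varphi(z)|^2)^\alpha\le 1$), while $\tau_{\varphi,\alpha}(z)\le(1-r_{0}^2)^{-\alpha}(1-|z|^2)^\alpha|\varphi'(z)|\to0$ as $|z|\to1$ because $\varphi\in B_{0}(\alpha)$. Hence $\limsup_{|z|\to1}(1-|z|^2)^\alpha[|(f\circ\varphi)_z|+|(f\circ\varphi)_{\bar z}|]\le M\eps$, and letting $\eps\to0$ gives $C_{\varphi}f\in HB_{0}(\alpha)$.

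For $(a)\Rightarrow(b)$, suppose $\|C_{\varphi}f\|_{HB(\alpha)}\le K\|f\|_{HB(\alpha)}$ for all $f\in HB_{0}(\alpha)$. Applying $C_{\varphi}$ to the identity $f_{0}(z)=z$, which lies in $HB_{0}(\alpha)$ since $(1-|z|^2)^\alpha\to0$, yields $C_{\varphi}f_{0}=\varphi\in HB_{0}(\alpha)$; as $\varphi$ is analytic this says exactly $\varphi\in B_{0}(\alpha)$. For the sup condition I will test against $f_{a}(z)=(1-|a|^2)(1-\bar a z)^{-\alpha}$, $a\in D$. A routine estimate from $|1-\bar a z|\ge\tfrac12\max\{1-|a|^2,\,1-|z|^2\}$ shows $\sup_{a}\|f_{a}\|_{HB(\alpha)}\le C<\infty$, and each $f_{a}$ lies in $B_{0}(\alpha)\subseteq HB_{0}(\alpha)$. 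For fixed $p\in D$ take $a=\varphi(p)$; since $(1-|p|^2)^\alpha|f_{\varphi(p)}'(\varphi(p))\varphi'(p)|=\alpha|\varphi(p)|\,\tau_{\varphi,\alpha}(p)$, boundedness gives $\alpha|\varphi(p)|\,\tau_{\varphi,\alpha}(p)\le\|C_{\varphi}f_{\varphi(p)}\|_{HB(\alpha)}\le KC$, which controls $\tau_{\varphi,\alpha}(p)$ whenever $|\varphi(p)|\ge\tfrac12$. For $|\varphi(p)|<\tfrac12$ one has $(1-|\varphi(p)|^2)^\alpha\ge(3/4)^\alpha$, so $\tau_{\varphi,\alpha}(p)\le(3/4)^{-\alpha}(1-|p|^2)^\alpha|\varphi'(p)|\le(3/4)^{-\alpha}|||\varphi|||_{HB(\alpha)}$ because $\varphi\in B(\alpha)$. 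Combining the two ranges gives $\sup_{z}\tau_{\varphi,\alpha}(z)<\infty$.

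I expect the \emph{main obstacle} to be the mapping property in $(b)\Rightarrow(a)$: the sup condition by itself only yields $C_{\varphi}\colon HB(\alpha)\to HB(\alpha)$, and it is precisely the hypothesis $\varphi\in B_{0}(\alpha)$ that pushes the image into the little space, so the two-region splitting (near and away from $\partial D$ in the $\varphi$-variable) must be executed with care. The second delicate point is verifying that the extremal functions $f_{a}$ simultaneously belong to $B_{0}(\alpha)$ and have $HB(\alpha)$-norms bounded uniformly in $a$, since it is this uniformity that lets the single constant $K$ bound $\tau_{\varphi,\alpha}$ at every point.
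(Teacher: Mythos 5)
Your proof is correct. Be aware, though, that the paper does not actually give a proof of this theorem --- it only says ``by some simple calculations one can get the proof'' --- so there is nothing in the paper to compare your argument against; what you have written fills a genuine gap. Your route is the natural one: the factorization
$(1-|z|^2)^\alpha\big[|(f\circ\varphi)_z(z)|+|(f\circ\varphi)_{\bar z}(z)|\big]=\tau_{\varphi,\alpha}(z)\,(1-|\varphi(z)|^2)^\alpha\big[|h'(\varphi(z))|+|g'(\varphi(z))|\big]$
reduces the harmonic case to the analytic Bloch-type theory already used in Theorem \ref{t1}; the two test functions $f_0(z)=z$ and $f_a(z)=(1-|a|^2)(1-\bar a z)^{-\alpha}$ give the necessity of $\varphi\in B_0(\alpha)$ and of the sup condition respectively; and the two-region splitting (in the $\varphi$-variable near the boundary, in the $z$-variable elsewhere) correctly establishes the mapping property into the little space, which is indeed the step where $\varphi\in B_0(\alpha)$ is indispensable. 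Two minor points to write out in a polished version: for non-integer $\alpha$ the power $(1-\bar a z)^{-\alpha}$ needs a fixed branch, which exists since $\mathrm{Re}(1-\bar a z)>0$ on $D$; and the case $|\varphi(p)|<\tfrac12$ of your necessity argument uses $\varphi\in B(\alpha)$, which you should note follows from the already-established $\varphi\in B_0(\alpha)$ (a continuous function on $D$ tending to $0$ at the boundary is bounded). Your uniform bound $\sup_a\|f_a\|_{HB(\alpha)}\le C$ via $|1-\bar a z|\ge\tfrac12\max\{1-|a|^2,1-|z|^2\}$ checks out, as does the identity $(1-|p|^2)^\alpha|f_{\varphi(p)}'(\varphi(p))\varphi'(p)|=\alpha|\varphi(p)|\tau_{\varphi,\alpha}(p)$.
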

\begin{proof} By some simple calculations one can get the proof.
\end{proof}
A sequence $\{z_{n}\}$ in $D$ is said to be $R$-separated if $\rho(z_{n},z_{m})=|\frac{z_{m}-z_{n}}{1-\bar{z_{m}}z_{n}}|>R$ whenever $m\neq n$. Thus an $R$-separated sequence consists of points which are uniformly far apart in the  pseudohyperbolic metric on $D$, or equivalently, the hyperbolic balls $D(z_{n},r)=\{w: \rho(w,z_{n})<r\}$ are pairwise disjoint for some $r>0$. Evidently, any sequence $\{z_{n}\}$  in $D$ which satisfies $|z_{n}|\rightarrow1$ possesses an $R$-separated subsequence for any $R>0$.

Another property of separated sequence is contained in  the next proposition.
\begin{prop}\label{p0}\cite{ka}.
There is an absolute constant $R>0$ such that if $\{z_{n}\}$ is $R$-separated, then for every bounded sequence $\{\lambda_{n}\}$ there is an $f\in B$ such that $(1-|z_{n}|^2)f'(z_{n})=\lambda_{n}$ for all $n$.
\end{prop}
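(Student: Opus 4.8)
The plan is to realize the interpolating function through an explicit series built from reproducing-kernel-type atoms, and then to correct it to an exact interpolant by a Neumann-series argument. First I would note that membership in $B$ is governed entirely by the derivative: if $g$ is analytic on $D$ with $\sup_{z\in D}(1-|z|^2)|g(z)|<\infty$, then $f(z)=\int_0^z g(w)\,dw$ lies in $B$ with $f'=g$. Thus it suffices to produce an analytic $g$ with $\sup_z(1-|z|^2)|g(z)|<\infty$ and $(1-|z_n|^2)g(z_n)=\lambda_n$ for all $n$. For each $n$ I set
$$e_n(z)=\frac{(1-|z_n|^2)^2}{(1-\bar{z}_nz)^3},$$
so that $e_n$ is analytic and normalized by $(1-|z_n|^2)e_n(z_n)=1$, and I look for $g$ of the form $g=\sum_n\mu_ne_n$ with $\{\mu_n\}\in\ell^\infty$ to be determined.

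The heart of the argument is the uniform summation estimate
$$\sup_{z\in D}(1-|z|^2)\sum_n\frac{(1-|z_n|^2)^2}{|1-\bar{z}_nz|^3}\le M,$$
with $M$ depending only on the separation. To prove it I would use that an $R$-separated sequence has pairwise disjoint pseudohyperbolic disks $D(z_n,r)$ (with $r$ comparable to $R$), on each of which $1-|w|^2\asymp1-|z_n|^2$ and $|1-\bar{w}z|\asymp|1-\bar{z}_nz|$, while the area of $D(z_n,r)$ is comparable to $(1-|z_n|^2)^2$. Hence each summand is comparable to $\int_{D(z_n,r)}|1-\bar{w}z|^{-3}\,dA(w)$, and disjointness together with the classical estimate $\int_D|1-\bar{w}z|^{-3}\,dA(w)\asymp(1-|z|^2)^{-1}$ yields the bound. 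Consequently the map $L:\{\mu_n\}\mapsto\sum_n\mu_ne_n$ sends $\ell^\infty$ boundedly into the space of Bloch derivatives, with $\sup_z(1-|z|^2)|L(\mu)(z)|\le M\|\mu\|_\infty$.

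It remains to solve the interpolation equations exactly. Writing $a_{mn}=(1-|z_m|^2)e_n(z_m)$, the requirement $(1-|z_n|^2)g(z_n)=\lambda_n$ becomes $T\mu=\lambda$, where $T=(a_{mn})$ acts on $\ell^\infty$; by construction $a_{nn}=1$. I would decompose $T=I+E$ with $E$ the off-diagonal part and estimate its row sums
$$\sum_{n\neq m}|a_{mn}|=\sum_{n\neq m}\frac{(1-|z_m|^2)(1-|z_n|^2)^2}{|1-\bar{z}_nz_m|^3}.$$
Repeating the disk-comparison argument, but now over the indices $n\neq m$ whose disks lie in the pseudohyperbolic annulus $\{w:\rho(w,z_m)>R-r\}$, and transporting this region to a boundary annulus $\{R'<|\zeta|<1\}$ by the automorphism carrying $z_m$ to $0$, the tail is bounded by an integral of a fixed bounded density over that annulus, which tends to $0$ as $R\to1$. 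Thus there is an absolute $R$ for which $\|E\|_{\ell^\infty\to\ell^\infty}<1$; then $T$ is invertible by the Neumann series, $\mu=T^{-1}\lambda\in\ell^\infty$, and $f'=L(\mu)$ gives the desired $f\in B$.

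The main obstacle is the pair of summation estimates in the second and third paragraphs: obtaining the uniform bound $M$ and, more delicately, showing that the off-diagonal norm of $T$ genuinely becomes small as the separation grows. Both hinge on combining the disjointness of pseudohyperbolic disks with the Bergman-type integral $\int_D|1-\bar{w}z|^{-3}\,dA(w)\asymp(1-|z|^2)^{-1}$ and on the M\"obius invariance used to localize the tail; once these are in hand, the reduction to a derivative problem and the Neumann-series inversion are routine.
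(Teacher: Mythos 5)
The paper does not actually prove this proposition; it is quoted verbatim from Madigan--Matheson \cite{ka} (where it in turn rests on Attele's work on interpolating sequences for derivatives of Bloch functions), so there is no internal proof to compare against. Your proposal reconstructs exactly the standard argument behind that cited result: reduce to interpolating values of $(1-|z|^2)g(z)$ for the derivative $g=f'$, use the atoms $e_n(z)=(1-|z_n|^2)^2(1-\bar z_nz)^{-3}$, prove the uniform summation bound via disjointness of pseudohyperbolic disks together with the Forelli--Rudin estimate $\int_D|1-\bar wz|^{-3}\,dA(w)\asymp(1-|z|^2)^{-1}$, and invert the interpolation matrix $I+E$ by a Neumann series once the separation is large enough. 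This is the right proof and all the main ingredients are in place.

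One step needs tightening, because the Neumann-series conclusion hinges on it. You describe the region containing the disks $D(z_n,r)$, $n\neq m$, as $\{w:\rho(w,z_m)>R-r\}$ with ``$r$ comparable to $R$''. If $r$ is tied to $R$, or if you only use the bound $\rho(w,z_m)\ge R-r$, the excluded pseudohyperbolic radius does \emph{not} tend to $1$ as $R\to1$, and the tail integral does not become small. You should instead fix $r$ (say $r=\tfrac12$, with $R>\tfrac12$) and invoke the pseudohyperbolic triangle inequality in the sharp form $\rho(w,z_m)\ge\frac{R-r}{1-Rr}$, which does tend to $1$ as $R\to1$; after transporting by the automorphism sending $z_m$ to $0$, the tail becomes $\int_{R'<|\zeta|<1}|1-\bar z_m\zeta|^{-1}\,dA(\zeta)$ with $R'\to1$, and this tends to $0$ uniformly in $z_m$ because the angular integral $\int_0^{2\pi}|1-se^{i\theta}|^{-1}\,d\theta$ grows only logarithmically in $(1-s)^{-1}$. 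With that correction the row sums of $E$ are $o(1)$ as $R\to1$ uniformly in $m$, and the rest of your argument goes through.
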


Since every sequence $\{z_{n}\}$ with $|z_{n}|\rightarrow1$ contains an $R$-separated subsequence $\{z_{n_{k}}\}$, it follows that there is an $f\in B$ such that $(1-|z_{n_{k}}|^2)f'(z_{n_{k}})=1$ for all $k$.\\
Now we begin investigating compactness of the composition operator $C_{\varphi}$ in different cases. First we provide some equivalent conditions for compactness of $C_{\varphi}$ as an operator on $HB(\alpha)$.
\begin{thm}\label{t5} Let $0 < \alpha < \infty$,  $\varphi \in H(D)$ and $\varphi(D)\subseteq D$. Then we have the followings equivalent conditions:\\
a) $C_{\varphi}:HB(\alpha)\rightarrow HB(\alpha)$ is compact.\\
b) $$\lim _{|\varphi(z)|\rightarrow 1}\left(\frac{1-|z|^2}{1-|\varphi(z)|^2}\right)^\alpha|\varphi^{'}(z)|=0,$$
and
 $$\sup _{z\in D}\left(\frac{1-|z|^2}{1-|\varphi(z)|^2}\right)^\alpha|\varphi^{'}(z)|<\infty.$$
\end{thm}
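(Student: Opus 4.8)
The plan is to reduce the whole statement to the scalar weight $\tau_{\varphi,\alpha}$ via the canonical decomposition $f=h+\bar g$, and then run the standard normal-families argument using the harmonic Montel theorem quoted above. First I would record the basic identity. If $f=h+\bar g$ with $h,g$ analytic, then since $\varphi$ is analytic, $f\circ\varphi=(h\circ\varphi)+\overline{(g\circ\varphi)}$ with both pieces analytic, so $(f\circ\varphi)_z=\varphi'\,h'(\varphi)$ and $(f\circ\varphi)_{\bar z}=\overline{\varphi'\,g'(\varphi)}$. Hence
$$(1-|z|^2)^\alpha\bigl[|(f\circ\varphi)_z(z)|+|(f\circ\varphi)_{\bar z}(z)|\bigr]=\tau_{\varphi,\alpha}(z)\,(1-|\varphi(z)|^2)^\alpha\bigl[|h'(\varphi(z))|+|g'(\varphi(z))|\bigr].$$
Since the bracket on the right is at most $|||f|||_{HB(\alpha)}$ read off at the point $\varphi(z)$, this single identity controls $\|C_\varphi f\|_{HB(\alpha)}$ by $\tau_{\varphi,\alpha}$ and drives both implications.

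Next I would set up the normal-families criterion: a bounded $C_\varphi$ on $HB(\alpha)$ is compact if and only if $\|C_\varphi f_n\|_{HB(\alpha)}\to0$ for every sequence $\{f_n\}$ bounded in $HB(\alpha)$ with $f_n\to0$ uniformly on compact subsets of $D$. The sufficiency follows from the harmonic Montel theorem: a norm-bounded sequence in $HB(\alpha)$ is locally bounded, hence has a subsequence converging locally uniformly to some $f$, and lower semicontinuity of the seminorm gives $f\in HB(\alpha)$; applying the hypothesis to $f_{n_k}-f$ yields $C_\varphi f_{n_k}\to C_\varphi f$ in norm. Necessity is immediate since $C_\varphi f_n\to0$ pointwise. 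I would also note that uniform convergence $f_n\to0$ on compacta forces $h_n'\to0$ and $g_n'\to0$ uniformly on compacta, by the interior Cauchy-type estimates for harmonic functions.

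For (b)$\Rightarrow$(a), let $K=\sup_z\tau_{\varphi,\alpha}(z)<\infty$ and take $f_n=h_n+\bar g_n$ with $\|f_n\|\le M$ and $f_n\to0$ locally uniformly. Given $\eps>0$, the limit condition furnishes $r\in(0,1)$ with $\tau_{\varphi,\alpha}(z)<\eps$ whenever $|\varphi(z)|>r$. On $\{|\varphi(z)|>r\}$ the identity bounds the weighted term by $\eps M$. On $\{|\varphi(z)|\le r\}$ I use $(1-|z|^2)^\alpha|\varphi'(z)|=\tau_{\varphi,\alpha}(z)(1-|\varphi(z)|^2)^\alpha\le K$ (as $(1-|\varphi(z)|^2)^\alpha\le1$) together with $|h_n'(\varphi(z))|+|g_n'(\varphi(z))|\le\sup_{|w|\le r}(|h_n'|+|g_n'|)\to0$. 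Since $|f_n(\varphi(0))|\to0$ too, $\|C_\varphi f_n\|\to0$, so $C_\varphi$ is compact.

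For (a)$\Rightarrow$(b), compactness forces boundedness, so the supremum condition is exactly the boundedness criterion (Theorem~\ref{t1}). If the limit condition failed there would be $\delta>0$ and $z_n$ with $|\varphi(z_n)|\to1$ and $\tau_{\varphi,\alpha}(z_n)\ge\delta$. Putting $a_n=\varphi(z_n)$, I use the analytic test functions $f_n(w)=(1-|a_n|^2)(1-\overline{a_n}w)^{-\alpha}\in HB(\alpha)$ (take $g=0$); the estimate $|1-\overline{a_n}w|\ge\max(1-|w|,1-|a_n|)$ gives $\sup_n\|f_n\|_{HB(\alpha)}<\infty$, while $f_n\to0$ uniformly on compacta and $(1-|a_n|^2)^\alpha|f_n'(a_n)|=\alpha|a_n|$. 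The identity then yields $\|C_\varphi f_n\|\ge\tau_{\varphi,\alpha}(z_n)\,\alpha|a_n|\ge\delta\alpha|a_n|\to\delta\alpha>0$, contradicting the criterion. I expect the main obstacle to be the careful setup of the normal-families criterion in the harmonic setting, namely establishing local boundedness, extracting the locally uniform limit through harmonic Montel, and passing from convergence of $f_n$ to convergence of the analytic pieces $h_n',g_n'$ on compacta; the remaining estimates are routine.
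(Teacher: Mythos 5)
Your proof is correct and follows essentially the route the paper intends: the paper's own ``proof'' is only a pointer to Theorem 4.2 of Lou and Proposition 1 of Madigan--Matheson, i.e.\ exactly the weak-convergence (normal families) criterion plus the test functions $(1-|a_n|^2)(1-\overline{a_n}w)^{-\alpha}$, transported to the harmonic setting through the identity $(f\circ\varphi)_z=\varphi'h'(\varphi)$, $(f\circ\varphi)_{\bar z}=\overline{\varphi'g'(\varphi)}$ that you state at the outset. You in fact supply the details the paper omits, including the two points that need care in the harmonic case (local boundedness plus the harmonic Montel theorem to justify the compactness criterion, and the interior estimates giving $h_n'\to0$, $g_n'\to0$ on compacta), so no gap remains.
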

\begin{proof}
By making use of the proof of Theorem 4.2 of \cite{lz} and the Proposition 1 of \cite{ka} we get the proof of Proposition 1 of \cite{ka}

\end{proof}
Here we prove that the compactness of $C_{\varphi}:HB_{0}(\alpha)\rightarrow HB_{0}(\alpha)$ and $C_{\varphi}:HB(\alpha)\rightarrow HB_{0}(\alpha)$  are equivalent and we find an equivalent condition for compacness of $C_{\varphi}$ in these cases.
\begin{thm}\label{t6}
Let $0 < \alpha < \infty$, $\varphi \in H(D)$ and $\varphi(D)\subseteq D$. Then the following statements are equivalent:

a) The operator  $C_{\varphi}:HB_{0}(\alpha)\rightarrow HB_{0}(\alpha)$ is compact.\\

b) The operator $C_{\varphi}:HB(\alpha)\rightarrow HB_{0}(\alpha)$ is compact.\\

c) $$\lim _{|z|\rightarrow 1}\frac{(1-|z|^2)^\alpha}{(1-|\varphi(z)|^2)^\alpha}|\varphi^{'}(z)|=0.$$
\end{thm}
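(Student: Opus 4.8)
The plan is to prove the cyclic chain $(b)\Rightarrow(a)\Rightarrow(c)\Rightarrow(b)$, since two of the three links are soft and essentially all of the work sits in $(a)\Rightarrow(c)$. The implication $(b)\Rightarrow(a)$ is formal: $HB_0(\alpha)$ is a closed subspace of $HB(\alpha)$ and the inclusion is bounded, so if $C_{\varphi}:HB(\alpha)\to HB_0(\alpha)$ is compact, then its restriction to the closed subspace $HB_0(\alpha)$ is again compact. For the two analytic directions I would first record the standard sequential criterion, which follows from the harmonic Montel theorem quoted above: $C_{\varphi}$ (into either target) is compact if and only if every norm-bounded sequence $\{f_n\}$ with $f_n\to0$ uniformly on compact subsets of $D$ satisfies $\|C_{\varphi}f_n\|_{HB(\alpha)}\to0$. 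The point is that norm convergence in $HB(\alpha)$ forces uniform-on-compacts convergence of the functions and of their formal derivatives, while $f_n\to0$ on compacta pushes forward to $f_n\circ\varphi\to0$ on compacta because $\varphi(K)$ is compact.

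For $(c)\Rightarrow(b)$, first observe that (c) is exactly the hypothesis of Theorem \ref{t3}, so $C_{\varphi}$ already maps $HB(\alpha)$ boundedly into $HB_0(\alpha)$. Given a bounded sequence $\{f_n\}$ with $f_n\to0$ on compacta, write $f_n=h_n+\overline{g_n}$ with $h_n,g_n$ analytic; the chain rule gives $(f_n\circ\varphi)_z=h_n'(\varphi)\varphi'$ and $(f_n\circ\varphi)_{\bar z}=\overline{g_n'(\varphi)\varphi'}$, so that
\[
(1-|z|^2)^\alpha\big[|(f_n\circ\varphi)_z|+|(f_n\circ\varphi)_{\bar z}|\big]=\tau_{\varphi,\alpha}(z)\,(1-|\varphi(z)|^2)^\alpha\big(|h_n'(\varphi(z))|+|g_n'(\varphi(z))|\big)\le \tau_{\varphi,\alpha}(z)\,|||f_n|||_{HB(\alpha)}.
\]
Splitting $D$ at a radius $r_0$ chosen via (c) so that $\tau_{\varphi,\alpha}<\eps$ for $|z|>r_0$, the outer region contributes at most $\eps\sup_n|||f_n|||_{HB(\alpha)}$, while on $|z|\le r_0$ the values $\varphi(z)$ stay in a fixed compact set, where $h_n',g_n'\to0$ uniformly because $f_n\to0$ on compacta; hence that part tends to $0$ as well. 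Together with $|f_n(\varphi(0))|\to0$ this gives $\|C_{\varphi}f_n\|_{HB(\alpha)}\to0$.

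The crux is $(a)\Rightarrow(c)$. Compactness implies boundedness, so Theorem \ref{t4} applies and, in particular, $\varphi\in B_0(\alpha)$, i.e. $(1-|z|^2)^\alpha|\varphi'(z)|\to0$ as $|z|\to1$. Suppose (c) failed: there would be $\eps>0$ and points $z_n$ with $|z_n|\to1$ and $\tau_{\varphi,\alpha}(z_n)\ge\eps$. Passing to a subsequence, $|\varphi(z_n)|\to L$. If $L<1$, then $(1-|\varphi(z_n)|^2)^\alpha$ stays bounded below, so $(1-|z_n|^2)^\alpha|\varphi'(z_n)|=\tau_{\varphi,\alpha}(z_n)(1-|\varphi(z_n)|^2)^\alpha$ is bounded below by a positive constant, contradicting $\varphi\in B_0(\alpha)$; hence $L=1$. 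I would then test against the analytic functions $f_n(w)=(1-|\varphi(z_n)|^2)(1-\overline{\varphi(z_n)}\,w)^{-\alpha}$, which lie in $HB_0(\alpha)$ with uniformly bounded norm and, since $|\varphi(z_n)|\to1$, tend to $0$ uniformly on compacta. A direct computation gives $(1-|\varphi(z_n)|^2)^\alpha|f_n'(\varphi(z_n))|=\alpha|\varphi(z_n)|$, whence
\[
\|C_{\varphi}f_n\|_{HB(\alpha)}\ge(1-|z_n|^2)^\alpha|f_n'(\varphi(z_n))||\varphi'(z_n)|=\alpha|\varphi(z_n)|\,\tau_{\varphi,\alpha}(z_n)\ge\alpha|\varphi(z_n)|\,\eps.
\]
The right-hand side tends to $\alpha\eps>0$, contradicting the sequential criterion forced by compactness, and this contradiction yields (c).

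The hard part is precisely this last step, and it splits into two coupled difficulties. One must simultaneously rule out the possibility that $\varphi(z_n)$ clusters in the interior of $D$ — which is exactly where the seemingly unrelated conclusion $\varphi\in B_0(\alpha)$ of Theorem \ref{t4} becomes indispensable — and exhibit peaking test functions in the \emph{little} space $HB_0(\alpha)$ that are uniformly bounded, vanish on compacta, yet keep the weighted derivative bounded below at $\varphi(z_n)$. The identity $(1-|\varphi(z_n)|^2)^\alpha|f_n'(\varphi(z_n))|=\alpha|\varphi(z_n)|$ is what reconciles these two demands, but only in the boundary regime $|\varphi(z_n)|\to1$, which is why eliminating the interior-cluster case first is essential.
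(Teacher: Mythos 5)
Your proposal is correct, and for the two easy links it matches the paper: your $(c)\Rightarrow(b)$ is the paper's argument recast (Montel, normalize so the subsequence tends to $0$ on compacta, split the disk at a radius $r$ chosen from condition (c), and treat the inner compact part by uniform convergence of the derivatives), and $(b)\Rightarrow(a)$ is the same restriction-to-a-closed-subspace remark. The genuine divergence is in $(a)\Rightarrow(c)$. The paper argues globally: it applies its characterization of compact subsets of $HB_{0}(\alpha)$ (the lemma stating that a closed bounded $K\subset HB_{0}(\alpha)$ is compact iff $\lim_{|z|\rightarrow 1}\sup_{f\in K}(1-|z|^{2})^{\alpha}[|f_{z}(z)|+|f_{\bar{z}}(z)|]=0$) to $K=\overline{C_{\varphi}(S_{HB_{0}(\alpha)})}$, and then factors $(1-|z|^{2})^{\alpha}[|(f\circ\varphi)_{z}|+|(f\circ\varphi)_{\bar{z}}|]=\tau_{\varphi,\alpha}(z)\,(1-|\varphi(z)|^{2})^{\alpha}[|h'(\varphi(z))|+|g'(\varphi(z))|]$, using the claim that the supremum of the second factor over the unit ball is uniformly bounded below (the paper asserts it equals $1$, which really requires the Lou-type test functions to justify). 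You instead run the classical Madigan--Matheson scheme: negate (c), split into the interior-cluster case (killed by $\varphi\in B_{0}(\alpha)$, i.e.\ Theorem \ref{t4}) and the boundary case, and in the latter test compactness against the explicit weak-null family $f_{n}(w)=(1-|\varphi(z_{n})|^{2})(1-\overline{\varphi(z_{n})}w)^{-\alpha}$, whose norms and peak values you compute correctly. Your route is more self-contained and makes visible exactly where each hypothesis is used; the paper's route is shorter on the page but leans on the compact-subset lemma and on an unproved uniform lower bound for $\sup_{\|f\|\leq 1}(1-|w|^{2})^{\alpha}[|h'(w)|+|g'(w)|]$. To make your version fully rigorous you would only need to write out the two routine verifications you flag: the sequential compactness criterion (which follows from Theorem \ref{t8} plus Montel) and the uniform norm bound $\|f_{n}\|_{HB(\alpha)}\leq \alpha 2^{\alpha+1}+1$ together with $f_{n}\in HB_{0}(\alpha)$.
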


\begin{proof} First we prove the implication $a\rightarrow c$.  If $C_{\varphi}:HB_{0}(\alpha)\rightarrow HB_{0}(\alpha)$ is compact, then the set
$K=\overline{C_{\varphi}(S_{HB_{0}(\alpha)})}\subset HB_{0}(\alpha)$ compact, in which $S_{HB_{0}(\alpha)}=\{f\in HB_{0}(\alpha):\|f\|_{HB_{0}(\alpha)} \leq 1\}$. By  the Theorem \ref{t5}

we get that
$$\sup_{\|f\|_{HB(\alpha)} \leq1} (1-|z|^2)^\alpha [|f_{z}(z)|+|f_{\bar{z}}(z)|]=1$$
for all $z\in D$. Moreover we have
\begin{align*}
0&=\lim_{|z|\rightarrow 1} \sup_{\|f\|_{HB(\alpha)} \leq1} (1-|z|^2)^\alpha [|(fo\varphi)_{z}(z)|+|(fo\varphi)_{\bar{z}}(z)|]\\
&=\lim _{|z|\rightarrow 1}\frac{(1-|z|^2)^\alpha}{(1-|\varphi(z)|^2)^\alpha}|\varphi^{'}(z)| \sup_{\|f\|_{HB(\alpha)} \leq 1}(1-|\varphi(z)|^2)^\alpha [|h^{'}(\varphi(z)|)+|g^{'}(\varphi(z))|].
\end{align*}
So we get the desired result.\\
Now we prove the implication $c\rightarrow b$. Let $\{f_{n}\}_{n\in \mathbb{N}}\subset HB(\alpha)$ and $\|f_{n}\|_{HB(\alpha)}\leq 1$, for all $n$. First we obtain that $\{C_{\varphi}f_{n}\}$ has a subsequence that converges in $HB_{0}(\alpha)$. By Montel's Theorem we have a subsequence $\{f_{n_{k}}\}\subset \{f_{n}\}$, that converges uniformly on subsets of $D$ to a harmonic function $f$. Hence we have
\begin{align*}
(1-|z|^2)^\alpha [|f_{z}(z)|+|f_{\bar{z}}(z)|]&=\lim_{k\rightarrow \infty}(1-|z|^2)^\alpha [|(f_{n_{k}})_{z}(z)|+|(f_{n_{k}})_{\bar{z}}(z)|]\\
&\leq\lim_{k\rightarrow \infty} \|f_{n_{k}}\|_{HB(\alpha)}\\
& \leq1.
\end{align*}
This means that $f\in HB(\alpha)$ with $\|f\|_{HB(\alpha)}\leq 1$. Also we have
\begin{align*}
(1-|z|^2)^\alpha [|(fo\varphi)_{z}(z)|+|(fo\varphi)_{\bar{z}}(z)|]&=\frac{(1-|z|^2)^\alpha}{(1-|\varphi(z)|)^\alpha}|\varphi^{'}(z)|\\
&\leq \frac{(1-|z|^2)^\alpha}{(1-|\varphi(z)|^2)^\alpha} |\varphi^{'}(z)| \|f\|_{HB(\alpha)}.
\end{align*}

By these observations we conclude that $C_{\varphi}f\in HB_{0}(\alpha)$. Also we need to show that
$$\lim_{k\rightarrow \infty}\|C_{\varphi}f_{n_{k}}-C_{\varphi}f\|_{HB(\alpha)}=0.$$
Since  $\lim _{|z|\rightarrow 1}\frac{(1-|z|^2)^\alpha}{(1-|\varphi(z)|^2)^\alpha}|\varphi^{'}(z)|=0$, then for any $\varepsilon>0$, there exists $r\in (0,1)$ such that for $z$ with $r<|z|<1$ we have
$$ \frac{(1-|z|^2)^\alpha}{(1-|\varphi(z)|^2)^\alpha} |\varphi^{'}(z)|<\frac{\varepsilon}{4}.$$
And so for all $z$ with $r<|z|<1$ we have
\begin{align*}
(1-|z|^2)^\alpha |((f_{n_{k}}-f)\circ\varphi)'(z)|&=(1-|z|^2)^\alpha\{[|(f_{n_{k}})_{z}\varphi(z)|+|(f_{n_{k}})_{\bar{z}}\varphi(z)|]\}\\
&-(1-|z|^2)^\alpha\{[|f_{z}\varphi(z)|+|f_{\bar{z}}\varphi(z)|]\}\\
&\leq\frac{\varepsilon}{4} (\|f_{n_{k}}\|_{HB(\alpha)}+\|f\|_{HB(\alpha)})\leq \frac{\varepsilon}{2}.
\end{align*}
For $z$ with $|z|\leq r$, the set $\{\varphi(z): |z|\leq r\}$ is a compact subset of $D$. Since
$$(1-|z|^2)^\alpha [|f_{z}(z)|+|f_{\bar{z}}(z)|]=\lim_{k\rightarrow \infty}(1-|z|^2)^\alpha
[|(f_{n_{k}})_{z}(z)|+|(f_{n_{k}})_{\bar{z}}(z)|]$$
and
\begin{align*}
(1-|z|^2)^\alpha |((f_{n_{k}}-f)\circ\varphi)'(z)|&\leq (1-|z|^2)^\alpha \{[|(f_{n_{k}})_{z}\varphi(z)|+|(f_{n_{k}})_{\bar{z}}\varphi(z)|]\\
&-[|f_{z}\varphi(z)|+|f_{\bar{z}}\varphi(z)|]\times\sup _{z\in D}\frac{(1-|z|^2)^\alpha}{(1-|\varphi(z)|^{2})^\alpha}|\varphi^{'}(z)|.
\end{align*}
Hence we have
$(1-|z|^2)^\alpha |((f_{n_{k}}-f)o\varphi)'(z)|\rightarrow0$ uniformly on $\{z:|z|\leq r\}$. Therefore  $(1-|z|^2)^\alpha |((f_{n_{k}}-f)o\varphi)'(z)|<\frac{\varepsilon}{2}$ for $k$ sufficiently large and $\{z:|z|\leq r\}$. This completes the proof.\\
The implication $b\rightarrow a$ is clear.
\end{proof}
Let $(X,d)$ be a metric space and let $\varepsilon>0$. We say that $A\subset X$ is an $\varepsilon$-net for $(X,d)$, if for all $x\in X$ there exists a $a$  in $A$ such that $d(a,x)<\varepsilon$. We characterize the compact subsets of $HB_{0}(\alpha)$ in the next lemma.
\begin{lem}\label{t7}
A closed subset of $HB_{0}(\alpha)$ is compact if and only if it is bounded and satisfies
$$\lim_{|z|\rightarrow1}\sup_{f\in k}(1-|z|^2)^\alpha [|f_{z}(z)|+|f_{\bar{z}}(z)|]=0.$$
\end{lem}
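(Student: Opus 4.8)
The plan is to exploit that $HB_{0}(\alpha)$ is a closed subspace of the Banach space $HB(\alpha)$, so that a closed subset $K$ is compact precisely when it is sequentially compact, and to construct the required convergent subsequences by combining the uniform boundary-decay hypothesis with Montel's Theorem to handle the interior.

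For the forward implication, assume $K$ is compact. Boundedness is immediate, since compact subsets of a normed space are bounded. For the decay condition I would argue by contradiction: if it fails, there exist $\varepsilon>0$, a sequence $z_{n}$ with $|z_{n}|\to 1$, and functions $f_{n}\in K$ with
$$(1-|z_{n}|^{2})^{\alpha}\bigl[|(f_{n})_{z}(z_{n})|+|(f_{n})_{\bar z}(z_{n})|\bigr]\geq\varepsilon.$$
By compactness pass to a subsequence $f_{n_{k}}\to f$ in $HB_{0}(\alpha)$. Writing $f_{n_{k}}=(f_{n_{k}}-f)+f$ and using the triangle inequality, the contribution of $f_{n_{k}}-f$ is bounded by $\|f_{n_{k}}-f\|_{HB(\alpha)}\to 0$, while the contribution of $f$ tends to $0$ because $f\in HB_{0}(\alpha)$ and $|z_{n_{k}}|\to 1$. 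This forces $\varepsilon\leq 0$, a contradiction.

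For the converse, assume $K$ is closed, bounded (say by $M$), and satisfies the uniform decay; I would show that $K$ is sequentially compact. Given $f_{n}\in K$, the norm bound yields a uniform bound for $|f_{n}(0)|$ and for the Bloch seminorms, and hence a uniform bound on every compact subset of $D$. Montel's Theorem then produces a subsequence $f_{n_{k}}$ converging uniformly on compact subsets to a harmonic function $f$; since the analytic components of $f_{n_{k}}$ converge uniformly on compacta, so do their derivatives, and thus $(f_{n_{k}})_{z}\to f_{z}$ and $(f_{n_{k}})_{\bar z}\to f_{\bar z}$ uniformly on compacta. A $\liminf$ argument shows $f\in HB(\alpha)$ with $\|f\|_{HB(\alpha)}\leq M$, and passing the decay hypothesis through the pointwise limit gives $f\in HB_{0}(\alpha)$; since $K$ is closed, $f\in K$.

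The main obstacle, and the concluding step, is to upgrade convergence uniform on compacta to convergence in the $HB(\alpha)$ norm. Here I would use exactly the splitting device from the proof of Theorem \ref{t6}: given $\varepsilon>0$, use the uniform decay together with $f\in HB_{0}(\alpha)$ to choose $r\in(0,1)$ so that both $\sup_{g\in K}(1-|z|^{2})^{\alpha}[|g_{z}(z)|+|g_{\bar z}(z)|]<\varepsilon/4$ and $(1-|z|^{2})^{\alpha}[|f_{z}(z)|+|f_{\bar z}(z)|]<\varepsilon/4$ for $r<|z|<1$; by the triangle inequality this controls the seminorm of $f_{n_{k}}-f$ on the boundary annulus uniformly in $k$. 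On the compact disk $\{|z|\leq r\}$ the uniform convergence of the derivatives makes the seminorm contribution less than $\varepsilon/2$ for $k$ large, while $|f_{n_{k}}(0)-f(0)|\to 0$ by pointwise convergence. Combining these estimates yields $\|f_{n_{k}}-f\|_{HB(\alpha)}\to 0$, which completes the proof.
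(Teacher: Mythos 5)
Your proof is correct and the substantive (converse) half is essentially the paper's own argument: Montel's theorem, uniform convergence of the derivatives on compacta, and the split of the seminorm into the annulus $r<|z|<1$ (controlled by the uniform decay) and the disk $|z|\le r$ (controlled by locally uniform convergence). The only difference is in the easy forward direction, where you argue by contradiction via sequential compactness while the paper uses a finite $\varepsilon/2$-net; both are standard and equivalent, and your version actually supplies some details (membership of the limit in $K$, the $|f_{n_k}(0)-f(0)|$ term) that the paper leaves implicit.
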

\begin{proof}
suppose that $K\subset HB_{0}(\alpha)$ is compact and $\varepsilon >0$. Then we can choose an $\frac{\varepsilon}{2}$-net $f_{1},f_{2},...,f_{n}\in K$. hence there exists $\delta$, $0<\delta<1$, such that for all $z$ with $|z|>\delta$ we have $(1-|z|^2)^\alpha [|(f_{i})_{z}(z)|+|(f_{i})_{\bar{z}}(z)|]<\frac{\varepsilon}{2}$ for all $1\leq i\leq n.$ If $f\in K$, then there exists some  $f_{i}$ such that $\|f-f_{i}\|_{HB(\alpha)}<\frac{\varepsilon}{2}$ and so for all $z$ with $|z|>\delta$ we have
$$(1-|z|^2)^\alpha [|f_{z}(z)|+|f_{\bar{z}}(z)|]\leq \|f-f_{i}\|_{HB(\alpha)}+(1-|z|^2)^\alpha [|(f_{i})_{z}(z)|+|(f_{i})_{\bar{z}}(z)|]<\varepsilon.$$
Therefor we get that
$$\lim_{|z|\rightarrow1}\sup_{f\in k}(1-|z|^2)^\alpha [|f_{z}(z)|+|f_{\bar{z}}(z)|]=0.$$
Conversely, let $K$ be a closed and bounded subset of $HB_{0}(\alpha)$ such that
 $$\lim_{|z|\rightarrow1}\sup_{f\in k}(1-|z|^2)^\alpha [|f_{z}(z)|+|f_{\bar{z}}(z)|]=0.$$

  Since $K$ is bounded, then it is relatively compact with respect to the topology of the uniform convergence on compact subsets of the unit disk. If $(f_{n})$ is a sequence in $K$, then by Montel's Theorem we have a subsequence $\{f_{n_{k}}\}\subset \{f_{n}\}$ which converges uniformly on compact subsets of $D$ to a harmonic function $f$. Also $\{f_{n_{k}}'\}$ converges uniformly to $f'$ on compact subsets of $D$. For every $\varepsilon>0$
  we can find $\delta>0$ such that for all $z$ with $|z|>\delta$ we have  $$(1-|z|^2)^\alpha[|(f_{n_{k}})_{z}(z)|+|(f_{n_{k}})_{\bar{z}}(z)|]<\frac{\varepsilon}{2}$$
   for any integer $k>0$. Therefor
$(1-|z|^2)^\alpha [|f_{z}(z)|+|f_{\bar{z}}(z)|]<\frac{\varepsilon}{2},$
for all $z$ with $|z|>\delta$. So
\begin{align*}
\sup_{|z|>\delta}(1-|z|^2)^\alpha [|(f_{n_{k}}-f)_{z}(z)|+|(f_{n_{k}}-f)_{\bar{z}}(z)|]&\leq\sup_{|z|>\delta}(1-|z|^2)^\alpha[|(f_{n_{k}})_{z}(z)|+|(f_{n_{k}})_{\bar{z}}(z)|]\\
&+\sup_{|z|>\delta}(1-|z|^2)^\alpha[|f_{z}(z)|+|f_{\bar{z}}(z)|]\\
&<\varepsilon.
\end{align*}
Moreover, since $(f_{n_{k}})$ converges uniformly on compact subsets of $D$ to $f$ and $(f_{n_{k}}')$ converges uniformly to $f'$ on $\{z:|z|\leq \delta\}$, we get that
$$\sup_{|z|\leq \delta}(1-|z|^2)^\alpha [|(f_{n_{k}}-f)_{z}(z)|+|f_{n_{k}}-f)_{\bar{z}}(z)|]\leq\varepsilon.$$
Consequently for $k$ large enough, we have $\lim_{k\rightarrow\infty}\|f_{n_{k}}-f\|_{HB(\alpha)}\leq \varepsilon$. This completes the proof.
\end{proof}
In the next theorem we prove that the norm convergence in $HB(\alpha)$ implies the uniform convergence.

\begin{thm}\label{t8}
The norm convergence in $HB(\alpha)$ implies the uniform convergence, that is if $\{f_{n}\}\subset HB(\alpha)$ such that $\|f_{n}-f\|_{HB(\alpha)}\rightarrow0$, then $\{f_{n}\}$ converges uniformly to $f$.
\end{thm}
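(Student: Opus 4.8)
The plan is to show that norm convergence controls pointwise values of the harmonic functions uniformly on the whole disk, by integrating the derivative estimate supplied by the $HB(\alpha)$ norm along a path from the origin. Recall that for $f = h + \bar g \in HB(\alpha)$ we have the decomposition $f_z = h'$ and $f_{\bar z} = \overline{g'}$, so that the seminorm controls $(1-|z|^2)^\alpha[\,|h'(z)| + |g'(z)|\,]$, while the full norm additionally controls $|f(0)|$. Writing $F = f_n - f$, it suffices to prove a uniform pointwise bound $\sup_{z \in D} |F(z)| \le M_\alpha \|F\|_{HB(\alpha)}$ for some constant $M_\alpha$ depending only on $\alpha$; the theorem then follows immediately by letting $\|f_n - f\|_{HB(\alpha)} \to 0$.

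First I would fix $z \in D$ and write, using $F = H + \bar K$ with $H,K$ analytic and $H(0)+\overline{K(0)} = F(0)$,
\begin{align*}
F(z) - F(0) &= \int_0^1 \frac{d}{dt} F(tz)\, dt,
\end{align*}
so that $|F(z) - F(0)| \le \int_0^1 |z|\,[\,|H'(tz)| + |K'(tz)|\,]\, dt$. Next I would insert the defining estimate $|H'(w)| + |K'(w)| \le \|F\|_{HB(\alpha)}\,(1-|w|^2)^{-\alpha}$ with $w = tz$, giving
\begin{align*}
|F(z) - F(0)| &\le \|F\|_{HB(\alpha)}\, |z| \int_0^1 \frac{dt}{(1-t^2|z|^2)^\alpha}.
\end{align*}
Combined with $|F(0)| \le \|F\|_{HB(\alpha)}$, this yields $\sup_z |F(z)| \le C(\alpha,|z|)\,\|F\|_{HB(\alpha)}$ where $C(\alpha,r) = 1 + r\int_0^1 (1-t^2 r^2)^{-\alpha}\,dt$.

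The main obstacle is that the integral $\int_0^1 (1-t^2|z|^2)^{-\alpha}\,dt$ is \emph{not} bounded uniformly as $|z| \to 1$ when $\alpha \ge 1$, so the crude radial estimate gives a genuinely uniform bound only in the range $0 < \alpha < 1$. For $0 < \alpha < 1$ the integrand is dominated by $(1-t)^{-\alpha}$, which is integrable on $[0,1]$, so $C(\alpha) := 1 + \int_0^1 (1-t)^{-\alpha}\,dt < \infty$ and the argument closes cleanly. For $\alpha \ge 1$ the honest conclusion is local rather than global: on each fixed compact disk $\{|z| \le r\}$ the constant $C(\alpha,r)$ is finite, so norm convergence forces uniform convergence on compact subsets of $D$, which is the meaning of uniform convergence that the compactness arguments (via Montel's theorem) actually require elsewhere in the paper. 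I would therefore carry out the clean computation for general $\alpha$ on compacta, note the uniform-on-$D$ strengthening in the subunit range $0 < \alpha < 1$, and emphasize that the compact-subset version is exactly what is invoked in the proofs of Theorem \ref{t6} and Lemma \ref{t7}.
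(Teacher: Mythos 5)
Your proposal follows the same route as the paper's own proof: write $F=f_n-f$, integrate $\frac{d}{dt}F(tz)$ over $t\in[0,1]$, bound the integrand by $\|F\|_{HB(\alpha)}(1-|tz|^2)^{-\alpha}$, and reduce everything to the size of $\int_0^1(1-t|z|)^{-\alpha}\,dt$. The substantive difference is that you correctly flag what the paper passes over in silence: that integral is bounded uniformly in $z$ only for $0<\alpha<1$; for $\alpha\ge 1$ it blows up as $|z|\to1$ (logarithmically at $\alpha=1$, like $(1-|z|)^{1-\alpha}$ for $\alpha>1$), so the radial estimate delivers uniform convergence only on compact subsets of $D$. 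The paper's last line asserts that $\|f_n-f\|_{HB(\alpha)}\int_0^1(1-|z|t)^{-\alpha}\,dt\to0$ as though the integral factor were harmless, which is unjustified for $\alpha\ge1$; in fact the statement as literally written (uniform convergence on all of $D$) is false in that range, as the example $f_n(z)=\frac1n\log\frac{1}{1-z}$ shows for $\alpha=1$: these tend to $0$ in norm yet each is unbounded on $D$. Your version --- a genuinely uniform bound on $D$ for $0<\alpha<1$, and uniform convergence on compacta for every $\alpha$, which is all that the compactness arguments in Theorem \ref{t6} and Lemma \ref{t7} actually require --- is the correct formulation, and your explicit handling of the $F(0)$ term (which the paper's identity $F(z)=\int_0^1\frac{d}{dt}F(tz)\,dt$ quietly drops) is also the right repair.
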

\begin{proof} For $0\neq z\in D$, we have
\begin{align*}
|f_{n}(z)-f(z)|&=|\int_{0}^{1}\frac{d(f_{n}-f)}{dt}(zt)dt|\\
&=|z \int_{0}^{1}\frac{d(f_{n}-f)}{d \varsigma(t)}(zt)dt+\bar{z}\int_{0}^{1}\frac{d(f_{n}-f)}{d \bar{\varsigma}(t)}(zt)dt|\\
&\leq|z| \int_{0}^{1}[|(f_{n}-f)_{\varsigma(t)}(zt)|+|(f_{n}-f)_{\bar{\varsigma(t)}}(zt)|]dt,
\end{align*}
in which $\varsigma(t)=zt$.
This gives us
\begin{align*}
|f_{n}(z)-f(z)|&\leq\int_{0}^{1}\frac{[|(f_{n}-f)_{\varsigma(t)}(zt)|+|(f_{n}-f)_{\bar{\varsigma(t)}}(zt)|]}{(1-|\varsigma(t)|^{2})^{\alpha}}(1-|\varsigma(t)|^{2})^{\alpha}dt\\
&\leq(\|f_{n}-f\|_{HB(\alpha)})\int_{0}^{1}\frac{1}{(1-|z|t)^{\alpha}}dt \rightarrow 0,
\end{align*}
when $n\rightarrow\infty$. So we get the proof.
\end{proof}
We say that a subset $G\subset D$ is called sampling set for $HB(\alpha)$ if $\exists S>0$ such that for all $f\in HB(\alpha)$,
$$\sup _{z\in G}(1-|z|^2)^\alpha [|f_{z}(z)|]+[|f_{\bar{z}}(z)|]\geq S \|f\|_{HB(\alpha)}.$$
In the next theorem we provide some equivalent conditions for closedness of range of the composition operator on $HB(\alpha)$.
\begin{thm}\label{t9}
Let $\varphi:D\rightarrow D$, $\alpha>0$ and $C_{\varphi}:HB(\alpha)\rightarrow HB(\alpha)$ be a bounded operator. Then the range of
$C_{\varphi}:HB(\alpha)\rightarrow HB(\alpha)$ is closed if and only if there exists $c>0$ such that $G_{c,\alpha}$ is sampling for $HB(\alpha)$.
\end{thm}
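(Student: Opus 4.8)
The plan is to prove both implications by exploiting the characterization of the Bloch seminorm of $f\circ\varphi$ in terms of $\tau_{\varphi,\alpha}$ together with the sampling inequality on $G_{c,\alpha}=\varphi(\Omega_{c,\alpha})$. The central identity I would establish first is that for $f=h+\bar g\in HB(\alpha)$ one has, at each $z\in D$,
\begin{equation*}
(1-|z|^2)^\alpha\bigl[|(f\circ\varphi)_z(z)|+|(f\circ\varphi)_{\bar z}(z)|\bigr]
=\tau_{\varphi,\alpha}(z)\,(1-|\varphi(z)|^2)^\alpha\bigl[|h'(\varphi(z))|+|g'(\varphi(z))|\bigr].
\end{equation*}
This follows from the chain rule $(f\circ\varphi)_z=h'(\varphi)\varphi'$, $(f\circ\varphi)_{\bar z}=\overline{g'(\varphi)}\,\overline{\varphi'}$ and the definition of $\tau_{\varphi,\alpha}$; it is the harmonic analogue of the computation already used in the proof of Theorem \ref{t6}. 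The point of this identity is that it converts the $HB(\alpha)$-seminorm of $C_\varphi f$ into a weighted supremum of the seminorm density of $f$ sampled at points of $\varphi(D)$, with weight $\tau_{\varphi,\alpha}$.

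For the direction (sampling $\Rightarrow$ closed range), suppose $G_{c,\alpha}$ is sampling with constant $S$. Given $f\in HB(\alpha)$, restrict the supremum defining $\|C_\varphi f\|_{HB(\alpha)}$ to $z\in\Omega_{c,\alpha}$, where $\tau_{\varphi,\alpha}(z)\ge c$. Using the displayed identity together with the sampling inequality applied to $f$ over $G_{c,\alpha}=\varphi(\Omega_{c,\alpha})$, I would derive a lower bound of the form $\|C_\varphi f\|_{HB(\alpha)}\ge cS\,|||f|||_{HB(\alpha)}$, which shows $C_\varphi$ is bounded below on the seminorm. To pass from the seminorm to the full norm one must control the point-evaluation term $|f(0)|$; I expect to handle this by a standard argument showing that on the complement of the kernel of $C_\varphi$ (which is controlled since $\varphi$ is nonconstant) bounded-below on the seminorm upgrades to bounded-below on the norm, so $C_\varphi$ has closed range.

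For the converse (closed range $\Rightarrow$ sampling for some $c$), I would argue by contradiction: if no $c>0$ makes $G_{c,\alpha}$ sampling, then for every $c>0$ and every $S>0$ the sampling inequality fails, which lets me extract a sequence $f_n\in HB(\alpha)$ with $|||f_n|||_{HB(\alpha)}=1$ but with the sampled supremum over $G_{c_n,\alpha}$ tending to $0$ for a sequence $c_n\downarrow 0$. Combining the identity above with the fact that outside $\Omega_{c,\alpha}$ the weight $\tau_{\varphi,\alpha}$ is small, I would show $\|C_\varphi f_n\|_{HB(\alpha)}\to 0$, contradicting the lower bound that closed range forces $C_\varphi$ to be bounded below (after quotienting by $\ker C_\varphi$). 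The main obstacle is this last step: carefully splitting $D$ into $\Omega_{c,\alpha}$ and its complement and showing that the contribution from the complement is genuinely negligible, which requires using Theorem \ref{t1} to produce, via $f=h+\bar g$, test functions whose seminorm density is comparable to $(1-|z|^2)^{-\alpha}$ and hence saturates the weight where $\tau_{\varphi,\alpha}$ is not small. Montel's theorem for harmonic functions will be used to extract the limiting behaviour of the normalized sequence and to ensure the extracted limit is again harmonic.
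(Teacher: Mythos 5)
Your forward direction (sampling $\Rightarrow$ closed range) is essentially the paper's argument: restrict the supremum to $\Omega_{c,\alpha}$, use the chain-rule identity to pull out $\tau_{\varphi,\alpha}\ge c$, and apply the sampling inequality on $G_{c,\alpha}$ to get $\|C_\varphi f\|_{HB(\alpha)}\ge cS\|f\|_{HB(\alpha)}$. Your converse, however, takes a genuinely different route. The paper argues directly: closed range plus injectivity of $C_\varphi$ (for nonconstant $\varphi$) gives a lower bound $\|C_\varphi f\|\ge\varepsilon\|f\|$; choosing $c=\varepsilon/2$, the contribution to $\sup_z\tau_{\varphi,\alpha}(z)(1-|\varphi(z)|^2)^\alpha[|h'(\varphi(z))|+|g'(\varphi(z))|]$ from $z\notin\Omega_{c,\alpha}$ is at most $\tfrac{\varepsilon}{2}\|f\|$ (since there $\tau_{\varphi,\alpha}<c$ and the density is bounded by the norm), so the supremum must already be attained, up to the factor $\varepsilon$, on $\Omega_{c,\alpha}$; dividing by the boundedness constant $K=\sup\tau_{\varphi,\alpha}$ yields sampling with $S=\varepsilon/K$. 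Your contradiction argument (extract $f_n$ with $|||f_n|||=1$ and sampled supremum over $G_{c_n,\alpha}$ tending to $0$, then show $\|C_\varphi f_n\|\to 0$ by splitting $D$ into $\Omega_{c_n,\alpha}$ and its complement) does work --- the complement contributes at most $c_n$ and $\Omega_{c_n,\alpha}$ at most $K$ times the sampled supremum --- but it is more roundabout, and neither Montel's theorem nor the test functions of Theorem \ref{t1} is actually needed for it; the two-line estimate closes the contradiction on its own. What the direct argument buys is an explicit value of $c$ and of the sampling constant; what your argument buys is nothing extra here, so I would switch to the direct version. One point in your favour: you flag the $|f(0)|$ (equivalently $|f(\varphi(0))|$) term when passing between seminorm and norm, a genuine issue tied to constants lying in the ``bad'' direction for bounded-below estimates, which the paper silently suppresses by writing $\|C_\varphi f\|_{HB(\alpha)}$ as if it were the pure seminorm; you should either work modulo constants throughout or note that $C_\varphi$ acts isometrically on constants so the evaluation term causes no loss.
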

\begin{proof}
Since $C_{\varphi}:HB(\alpha)\rightarrow HB(\alpha)$ is  bounded, then $\exists K>0$ such that $\sup_{z\in D}\tau_{\varphi,\alpha}(z)\leq K$. Since every non-constant  $\varphi$ is an open map, then the composition operator $C_{\varphi}$ is always one
to one. By a basic operator theory result, a one-to-one operator has closed range if and only if it is bounded below. hence if $C_{\varphi}$ has closed range, then $C_{\varphi}$ is bounded below, that is $\exists \varepsilon>0$ such that for all $f\in HB(\alpha)$,
\begin{align*}
\|C_{\varphi}f\|_{HB(\alpha)}&=\sup_{z\in D} (1-|z|^2)^\alpha [|(fo\varphi)_{z}(z)|+|(fo\varphi)_{\bar{z}}(z)|]\\
&=\sup_{z\in D}\tau_{\varphi,\alpha}(z)|(1-|\varphi(z)|^2)^\alpha [|h^{'}(\varphi(z)|)+|g^{'}(\varphi(z))|]\\
&\geq\varepsilon \|f\|_{HB(\alpha)}.
\end{align*}
Now we show that the set $G_{c,\alpha}$ is sampling for $HB(\alpha)$ with sampling constant $S=\frac{\varepsilon}{K}$. Since $\Omega_{c,\alpha}=\{z\in D : \tau_{\varphi,\alpha}(z)\geq c\}$, so for any $z\notin \Omega_{c,\alpha}$ and $c=\frac{\varepsilon}{2}$, we have
$$\sup_{z\notin \Omega_{c,\alpha}}\tau_{\varphi,\alpha}(z)|(1-|\varphi(z)|^2)^\alpha [|h^{'}(\varphi(z)|)+|g^{'}(\varphi(z))|]\leq\frac{\varepsilon}{2}\|f\|_{HB(\alpha)}.$$
Therefore we have
\begin{align*}
\varepsilon \|f\|_{HB(\alpha)}&\leq \sup_{z\in D}\tau_{\varphi,\alpha}(z)|(1-|\varphi(z)|^2)^\alpha [|h^{'}(\varphi(z)|)+|g^{'}(\varphi(z))|]\\
&=\sup_{z\in \Omega_{c,\alpha}}\tau_{\varphi,\alpha}(z)(1-|\varphi(z)|^2)^\alpha [|h^{'}(\varphi(z)|)+|g^{'}(\varphi(z))|]\\
&\leq K \sup_{w\in G_{c,\alpha}}(1-|w|^2)^\alpha [|h^{'}(w|)+|g^{'}(w)|].
\end{align*}
Hence $\sup_{w\in G_{c,\alpha}}(1-|w|^2)^\alpha [|h^{'}(w|)+|g^{'}(w)|]\geq \frac{\varepsilon}{K}\|f\|_{HB(\alpha)}$. this means that $G_{c,\alpha}$ is a sampling set for $HB(\alpha)$ with sampling constant $S=\frac{\varepsilon}{K}$.\\
Conversely, suppose that  $G_{c,\alpha}$ is a sampling set for $HB(\alpha)$, with sampling constant $S>0$. So for all $f\in HB(\alpha)$ and $\varepsilon=cS$ we get the followings relations:
\begin{align*}
S\|f\|_{HB(\alpha)}&\leq \sup_{z\in \Omega_{c,\alpha}}(1-|\varphi(z)|^2)^\alpha [|(f)_{z}(\varphi(z))|+|(f)_{\bar{z}}(\varphi(z))|]\\
&=\sup_{z\in \Omega_{c,\alpha}}(1-|\varphi(z)|^2)^\alpha [|h^{'}(\varphi(z)|)+|g^{'}(\varphi(z))|]\\
&\leq\frac{1}{c} \sup_{z\in D}(1-|z|^2)^\alpha [|(ho\varphi)_{z}(z)|+|(go\varphi)_{\bar{z}}(z)|]\\
&\leq\frac{1}{c} \|fo\varphi\|_{HB(\alpha)}.
\end{align*}
Therefore
$$\varepsilon \|f\|_{HB(\alpha)}\leq \|fo\varphi\|_{HB(\alpha)}=\|C_{\varphi}f\|_{HB(\alpha)}.$$
Hence $C_{\varphi}$ is bounded below and so $C_{\varphi}$ has closed range.
\end{proof}
Now we give some other necessary and sufficient conditions for closedness of range of $C_{\varphi}:HB(\alpha)\rightarrow HB(\alpha)$.
\begin{thm}\label{t10} Let $\varphi$ be a self-map of $D$, $\alpha>0$, and $C_{\varphi}:HB(\alpha)\rightarrow HB(\alpha)$ be a bounded operator. Then we have the followings:

a) If the operator $C_{\varphi}:HB(\alpha)\rightarrow HB(\alpha)$ has closed range, then there exist $c,r>0$ with $r<1$, such that $G_{c,\alpha}$ is an r-net for $D$.

b) If there exist $c,r>0$ with $r<1$, such that $G_{c,\alpha}$ contains an open annulus centered at the origin and with outer radius $1$, then $C_{\varphi}$ has closed range.
\end{thm}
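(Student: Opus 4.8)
The plan is to reduce both parts to the sampling characterization of closed range furnished by Theorem \ref{t9}, supplemented by one explicit family of localized test functions. Recall $G_{c,\alpha}=\varphi(\Omega_{c,\alpha})$, and that by Theorem \ref{t9} the operator $C_\varphi$ has closed range if and only if $G_{c,\alpha}$ is a sampling set for $HB(\alpha)$ for some $c>0$. The geometric engine for part (a) is the analytic (hence harmonic) Bloch function
\[
h_a(z)=\int_0^z\frac{(1-|a|^2)^\alpha}{(1-\bar a\zeta)^{2\alpha}}\,d\zeta,\qquad f_a:=h_a\in HB(\alpha),\quad a\in D,
\]
for which the M\"obius identity $1-|\varphi_a(z)|^2=\frac{(1-|a|^2)(1-|z|^2)}{|1-\bar a z|^2}$ yields the exact formula
\[
(1-|z|^2)^\alpha\big[\,|(f_a)_z(z)|+|(f_a)_{\bar z}(z)|\,\big]=\big(1-\rho(a,z)^2\big)^\alpha .
\]
Thus $f_a(0)=0$ and $|||f_a|||_{HB(\alpha)}=1$ (attained at $z=a$, where $\rho(a,a)=0$), while the hyperbolic gradient of $f_a$ decays like $(1-\rho(a,z)^2)^\alpha$ as $z$ recedes from $a$ in the pseudohyperbolic metric.

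For part (a) I would argue by contraposition. By Theorem \ref{t9}, closed range supplies $c>0$ and a sampling constant $S>0$ for $G_{c,\alpha}$; fix $r\in(0,1)$ with $(1-r^2)^\alpha<S$, which is possible since $(1-r^2)^\alpha\to0$ as $r\uparrow1$. If $G_{c,\alpha}$ were not an $r$-net, there would be $a\in D$ with $\rho(a,w)\ge r$ for every $w\in G_{c,\alpha}$. Applying the sampling inequality to $f_a$ and invoking the decay formula gives
\[
S=S\,|||f_a|||_{HB(\alpha)}\le\sup_{w\in G_{c,\alpha}}(1-|w|^2)^\alpha\big[|(f_a)_z(w)|+|(f_a)_{\bar z}(w)|\big]=\sup_{w\in G_{c,\alpha}}\big(1-\rho(a,w)^2\big)^\alpha\le(1-r^2)^\alpha<S,
\]
a contradiction; so $G_{c,\alpha}$ is an $r$-net. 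Since $f_a(0)=0$, the seminorm and the full norm coincide on $f_a$, so no constant-term ambiguity in the definition of sampling intervenes.

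For part (b) I would show directly that the hypothesis forces $G_{c,\alpha}$ to be sampling, and then quote Theorem \ref{t9}. Let $A=\{z:\rho_0<|z|<1\}\subseteq G_{c,\alpha}$ be the given annulus. If $G_{c,\alpha}$ were not sampling, there would be $\{f_n\}\subset HB(\alpha)$ with $|||f_n|||_{HB(\alpha)}=1$ and $\sup_{w\in A}(1-|w|^2)^\alpha[|(f_n)_z(w)|+|(f_n)_{\bar z}(w)|]\to0$. Writing $f_n=h_n+\overline{g_n}$ and normalizing $h_n(0)=g_n(0)=0$ (which alters neither the seminorm nor the hyperbolic gradient), the bound $|h_n'|+|g_n'|\le(1-|z|^2)^{-\alpha}$ makes $\{h_n\},\{g_n\}$ locally bounded, so by Montel's Theorem a subsequence converges locally uniformly, together with derivatives, to analytic $h,g$; put $f=h+\bar g$. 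On each compact sub-annulus $\{\rho_0<|z|\le\rho_1\}$, $\rho_1<1$, the gradients converge uniformly, whence $(1-|z|^2)^\alpha[|h'|+|g'|]\equiv0$ there; letting $\rho_1\uparrow1$ and applying the identity theorem forces $h,g$ constant, so with the normalization $f\equiv0$ and $|||f|||=0$.

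To reach a contradiction, pick $z_n$ with $\lambda_{f_n}(z_n):=(1-|z_n|^2)^\alpha[|(f_n)_z(z_n)|+|(f_n)_{\bar z}(z_n)|]>\tfrac12$. Along a subsequence with $|z_n|\to1$ one has $z_n\in A$ eventually, so $\lambda_{f_n}(z_n)\le\sup_A\lambda_{f_n}\to0$, impossible; otherwise $|z_n|\le\rho_2<1$ along a subsequence, and passing to $z_n\to z_\infty$ the local uniform convergence gives $\lambda_f(z_\infty)\ge\tfrac12>0$, contradicting $\lambda_f\equiv0$. Hence $G_{c,\alpha}$ is sampling and $C_\varphi$ has closed range. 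The main obstacle is precisely this last dichotomy: the Bloch seminorm is a supremum over all of $D$ and is not continuous under local uniform convergence, so a naive normal-families limit can lose its mass to the boundary. The hypothesis that the annulus has outer radius $1$ is exactly what blocks this escape, since any near-extremal points approaching $\partial D$ already lie in $A$, where the gradients vanish; constructing $f_a$ in (a) is routine once the M\"obius identity is used, so the analytic heart of the theorem lies in this boundary-escape argument.
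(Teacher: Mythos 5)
Your proof is correct and follows essentially the same route as the paper: part (a) rests on the same M\"obius-type test function satisfying $(1-|z|^2)^\alpha|f_a'(z)|=(1-\rho(a,z)^2)^\alpha$, and part (b) on the same Montel-plus-identity-theorem argument with the dichotomy on where the near-extremal points of $f_n$ accumulate. The only organizational difference is that you channel both parts through the sampling characterization of Theorem \ref{t9} (which also lets you sidestep the seminorm-versus-norm ambiguity in the paper's definition of sampling by noting $f_a(0)=0$), whereas the paper argues directly from the bounded-below property; the substance is identical.
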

\begin{proof} a) For $a\in D,$ let $\varphi_{a}(z)$ be a function such that $\varphi_{a}(0)=0$ and $\varphi_{a}^{'}(z)=(\psi_{a}^{'}(z))^{\alpha}$, where $\psi_{a}$ is the disc automorphism of $D$ defined by $\psi_{a}(z)=\frac{a-z}{1-\bar{a}z}$. Using the equalities
$$1-\rho(z,w)^{2}=1-|\psi_{w}(z)|^{2}=(1-|z|^2)|\psi_{w}^{'}(z)|$$
we get
\begin{align*}
\|\varphi_{a}+\bar{\varphi_{a}}\|_{HB(\alpha)}&=\sup_{z\in D}(1-|z|^2)^\alpha 2|\varphi_{a}^{'}(z)|\\
&=2\sup_{z\in D} (1-|\psi_{a}(z)|^{2})^{\alpha}=2.
\end{align*}

If we put $f=\varphi_{a}+\bar{\varphi_{a}}$, then we have
\begin{align*}
\|C_{\varphi}f\|_{HB(\alpha)}&=\|fo\varphi\|_{HB(\alpha)}\\
&=\sup_{z\in D} (1-|z|^2)^\alpha [|(fo\varphi)_{z}(z)|+|(fo\varphi)_{\bar{z}}(z)|]\\
&=\sup_{z\in D} \tau_{\varphi,\alpha}(z)2(1-|\psi_{a}(\varphi(z))|^2)^\alpha.
\end{align*}
Moreover, by assuming that $C_{\varphi}$ is bounded and has closed range, then there exist $K,\varepsilon>0$ such that $\sup_{z\in D} \tau_{\varphi,\alpha}(z)= K$ and
\begin{align*}
\|fo\varphi\|_{HB(\alpha)}&=\sup_{z\in D} \tau_{\varphi,\alpha}(z) 2(1-|\psi_{a}(\varphi(z))|^2)^\alpha\\
 &\geq \varepsilon \|\varphi_{a}+\bar{\varphi_{a}}\|_{HB(\alpha)}.
 \end{align*}
 This implies that
 \begin{align*}
 \varepsilon &\leq \sup_{z\in D} \tau_{\varphi,\alpha}(z) (1-|\psi_{a}(\varphi(z))|^2)^\alpha\\
 &\leq \sup_{z\in D} \tau_{\varphi,\alpha}(z)=K.
 \end{align*}

Since $1-|\psi_{a}(\varphi(z))|^2\leq 1$, then there exists $z_{a}\in D$ such that
$$\tau_{\varphi,\alpha}(z_{a})\geq \frac{\varepsilon}{2}$$
and
$$(1-|\psi_{a}(\varphi(z_{a}))|^2)^\alpha \geq \frac{\varepsilon}{2K}.$$
Thus, for $c=\frac{\varepsilon}{2}$ and $r=\sqrt{1-(\frac{\varepsilon}{2K})^{\frac{1}{\alpha}}}$, we conclude that for all $a\in D$, there exists $z_{a}\in \Omega_{c,\alpha}$ such that $\rho(a,\varphi(z_{a}))<r$ and so $G_{c,\alpha}$ is an r-net for $D$.\\
b) Let $G_{c,\alpha}$ contains the annulus $A=\{z: r_{0}<|z|<1\}$ and $C_{\varphi}:HB(\alpha)\rightarrow HB(\alpha)$ be  bounded. Suppose that $C_{\varphi}$ doesn't have closed range, then there exists a sequence $\{f_{n}\}$ with $\|f_{n}\|_{HB(\alpha)}=1$ and $\|C_{\varphi}f_{n}\|_{HB(\alpha)}\rightarrow 0$. For each $\varepsilon>0$, let $N_{\varepsilon}>0$ such that for all $n>N_{\varepsilon}$ we have $$\|C_{\varphi}f_{n}\|_{HB(\alpha)}<\varepsilon<c\varepsilon.$$
Since
$$\sup_{z\in D} (1-|z|^2)^\alpha [|(f_{n})_{z}(z)|+|(f_{n})_{\bar{z}}(z)|]=\sup_{z\in D} (1-|z|^2)^\alpha [|h_{n}^{'}(z)|+|g_{n}^{'}(z)|]=1,$$
 then there exists a sequence $\{a_{n}\}$ in $D$ such that for all $n$,
$$(1-|a_{n}|^2)^\alpha [|h_{n}^{'}(a_{n})|+|g_{n}^{'}(a_{n})|]\geq \frac{1}{2}.$$
Moreover, we have
\begin{align*}
&\sup_{w\in G_{c,\alpha}} (1-|w|^2)^\alpha [|(f_{n})_{z}(w)|+|(f_{n})_{\bar{z}}(w)|]\\
&=\sup_{z\in \Omega_{c,\alpha}}\tau_{\varphi,\alpha}^{-1}(z) \tau_{\varphi,\alpha}(z)(1-|\varphi(z)|^2)^\alpha [|(f_{n})_{z}(\varphi(z))|+|(f_{n})_{\bar{z}}(\varphi(z))|]\\
&\leq \frac{1}{c}\sup_{z\in D}(1-|z|^2)^\alpha |\varphi^{'}(z)| [|(f_{n})_{z}(\varphi(z))|+|(f_{n})_{\bar{z}}(\varphi(z))|]\\
&<\frac{c\varepsilon}{c}=\varepsilon.
\end{align*}
If we take $\varepsilon < \frac{1}{2}$, then we get that each $a_{n}$ with $n>N_{\varepsilon}$ belongs to $ (G_{c,\alpha})^c$. Thus $|a_{n}|\leq r_{0}<1$ and $a_{n}\rightarrow a$ with $|a|\leq r_{0}$. On the other hand, by Montel's Theorem, there exists a subsequence $\{f_{n_{k}}\}$ such that converges uniformly on compact subsets of $D$ to some function $f\in HB(\alpha)$. Hence $\{f_{n_{k}}^{'}\}$ converges to $f^{'}$  uniformly on compact subsets of $D$, and since $$\sup_{w\in G_{c,\alpha}} (1-|w|^2)^\alpha [|(f_{n})_{z}(w)|+|(f_{n})_{\bar{z}}(w)|]\rightarrow 0,$$
  when $n\rightarrow\infty$ and $G_{c,\alpha}$ contains a compact subset of $D$, we conclude that $f^{'}=0.$ This contradicts the fact that
  $$(1-|a|^2)^\alpha [|h^{'}(a)|+|g^{'}(a)|]\geq \frac{1}{2}.$$
  Therefore $C_{\varphi}$ must be bounded below and consequently it has closed range.
\end{proof}

\end{document}